\newtheorem{thm}{Theorem}[section]
\newtheorem{lem}[thm]{Lemma}
\newtheorem{cor}[thm]{Corollary}
\newtheorem{prop}[thm]{Proposition}
\newtheorem{rmk}[thm]{Remark}
\newtheorem{ques}[thm]{Question}
\newtheorem{o-q}[thm]{Open Question}
\newtheorem{thm-con}[thm]{Theorem-Conjecture}
\numberwithin{equation}{section}
\theoremstyle{definition}
\newcommand{\f}{\Bbb F}
\begin{document}

\title[Affine Linear Groups]{Optimal Binary Constant Weight Codes and Affine Linear Groups over Finite Fields}

\author[Xiang-dong Hou]{Xiang-dong Hou}
\address{Department of Mathematics and Statistics,
University of South Florida, Tampa, FL 33620}
\email{xhou@usf.edu}

\keywords{affine linear group, BIBD, constant weight code, Johnson bound}

\subjclass[2010]{05B05, 05E18, 94B25}

\begin{abstract}
Let $\text{AGL}(1,\f_q)$ be the affine linear group of dimension $1$ over a finite field $\f_q$. $\text{AGL}(1,\f_q)$ acts sharply 2-transitively on $\f_q$. Given $S<\text{AGL}(1,\f_q)$ and an integer $k$ with $1\le k\le q$, does there exist a subset $B\subset\f_q$ with $|B|=k$ such that $S=\text{AGL}(1,\f_q)_B$? ($\text{AGL}(1,\f_q)_B=\{\sigma\in\text{AGL}(1,\f_q):\sigma(B)=B\}$ is the stabilizer of $B$ in $\text{AGL}(1,\f_q)$.) We derive a sum that holds the answer to this question. This result determines all possible parameters of binary constant weight codes that are constructed from the action of $\text{AGL}(1,\f_q)$ on $\f_q$ to meet the Johnson bound. Consequently, the values of the function $A_2(n,d,w)$ are determined for many parameters, where $A_2(n,d,w)$ is the maximum number of codewords in a binary constant weight code of length $n$, weight $w$ and minimum distance $\ge d$.
\end{abstract}

\maketitle

\section{Introduction}

Let $\f_q$ denote the finite field with $q$ elements. A binary constant weight code of length $n$ is a subset $C\subset\f_2^n$ such that every codeword of $C$ has the same Hamming weight. Let $n,d,w$ be positive integers. By an $(n,d,w)_2$ code, we mean a binary constant weight code of length $n$, weight $w$, and minimum distance $\ge d$. (This is not a standard notation in coding theory but serves the purpose of the present paper conveniently.) Let $A_2(n,d,w)$ be the maximum number of codewords in an $(n,d,w)_2$ code. The values of the function $A_2(n,d,w)$ have been the focus of numerous studies in coding theory and combinatorial designs; see for example \cite{Agrell-Vardy-Zeger-IEEE-IT-2000, Brouwer-Etzion, Brouwer-Shearer-Sloane-Smith-IEEE-IT-1990, Chee-Xing-Yeo-IEEE-IT-2010, Johnson-IEEE-IT-1962, Ostergard-IEEE-IT-2010, Schrijver-IEEE-IT-2005, Smith-Hughes-Perkins-E-JC-2006, Sun-JAA-2017, Tonchev-1988}. A fundamental fact about the function $A_2(n,d,w)$ is the following upper bound.

\begin{thm}[The restricted Johnson bound {\cite{Johnson-IEEE-IT-1962}, \cite[\S2.3.1]{Huffman-Pless-2003}}]\label{T0} 
We have
\begin{equation}\label{1.1}
A_2(n,d,w)\le \frac{nd}{2w^2-2nw+nd}
\end{equation}
provided $2w^2-2nw+nd>0$.
\end{thm}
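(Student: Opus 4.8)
The plan is to run a standard double-counting argument on the $M\times n$ incidence matrix of an optimal code, combined with the Cauchy--Schwarz inequality. Let $C$ be an $(n,d,w)_2$ code with $M=|C|$; we may assume $M\ge1$. Identify each codeword with its support, a $w$-subset of $\{1,\dots,n\}$. For distinct codewords $u,v$ one has $d(u,v)=|u|+|v|-2|u\cap v|=2w-2|u\cap v|$, so the hypothesis $d(u,v)\ge d$ is equivalent to $|u\cap v|\le w-\tfrac d2$. The first step is to sum this over all unordered pairs of codewords:
\[
\sum_{\{u,v\}\subset C,\ u\ne v}|u\cap v|\ \le\ \binom{M}{2}\Bigl(w-\frac d2\Bigr).
\]

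Next I would evaluate the same sum columnwise. For $j\in\{1,\dots,n\}$ let $c_j$ be the number of codewords whose support contains $j$; then $\sum_{j=1}^n c_j=Mw$, and counting the pairs of codewords that agree in a $1$ in column $j$ gives
\[
\sum_{\{u,v\}\subset C,\ u\ne v}|u\cap v|\ =\ \sum_{j=1}^n\binom{c_j}{2}\ =\ \frac12\sum_{j=1}^n c_j^2-\frac12\sum_{j=1}^n c_j .
\]
By Cauchy--Schwarz, $\sum_{j=1}^n c_j^2\ge\frac1n\bigl(\sum_{j=1}^n c_j\bigr)^2=M^2w^2/n$. Combining the three displays and clearing the common factor $\tfrac12M$ yields
\[
\frac{Mw^2}{n}-w\ \le\ (M-1)\Bigl(w-\frac d2\Bigr),
\]
which, after expanding and multiplying through by $2n$, rearranges to $M\,(2w^2-2nw+nd)\le nd$. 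Under the hypothesis $2w^2-2nw+nd>0$ we may divide to obtain $M\le nd/(2w^2-2nw+nd)$, which is \eqref{1.1}.

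The argument is essentially routine and I do not anticipate a serious obstacle; the only points needing care are tracking the factor $\binom{M}{2}$ correctly when passing between the two expressions for $\sum|u\cap v|$, and checking the sign of $2w^2-2nw+nd$ before dividing (which is exactly what the hypothesis supplies). I would also remark that the Cauchy--Schwarz step is where the bound is tight: equality forces all column sums $c_j$ to be equal and all pairwise intersections $|u\cap v|$ to equal $w-\tfrac d2$, i.e. the supports of the codewords form a combinatorial design. This observation is the point of departure for the rest of the paper, where codes meeting \eqref{1.1} are produced from the $2$-transitive action of $\mathrm{AGL}(1,\f_q)$ on $\f_q$.
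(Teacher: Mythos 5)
The paper states this theorem as a known result with citations to Johnson and to Huffman--Pless and supplies no proof of its own, so there is nothing internal to compare against; your double-counting argument (pairwise intersections bounded via $d(u,v)=2w-2|u\cap v|\ge d$, columnwise evaluation $\sum_j\binom{c_j}{2}$, Cauchy--Schwarz, and the final rearrangement to $M(2w^2-2nw+nd)\le nd$) is correct and is the standard proof from those references. Your closing remark on the equality case correctly anticipates Theorem~\ref{T1.1}.
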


When $d=2\delta$, \eqref{1.1} gives
\begin{equation}\label{1.2}
A_2(n,2\delta,w)\le \frac{n\delta}{w^2-nw+n\delta}
\end{equation}
provided $w^2-nw+n\delta>0$. The codes achieving the equality in \eqref{1.2} are precisely those constructed from balanced incomplete block designs.

A {\em balanced incomplete block design} (BIBD) with parameters $(v,b,r,k,\lambda)$, where the parameters are positive integers, is a pair $D=(V,\mathcal B)$, where $V=\{x_1,\dots,x_v\}$ is a $v$-element set and $\mathcal B=\{B_1,\dots,B_b\}$ is a family of $b$ $k$-element subsets of $V$, and $B_1,\dots,B_b$, called {\em blocks}, are such that each element of $V$ belongs to $r$ blocks and any two elements of $V$ belong to $\lambda$ blocks. The incidence matrix of $D$ is the $v\times b$ matrix $A=(a_{ij})$ where
\[
a_{ij}=\begin{cases}
1&\text{if}\ x_i\in B_j,\cr
0&\text{otherwise}.
\end{cases}
\]
The obvious relations $bk=vr$ and $r(k-1)=\lambda(v-1)$ allow $r$ and $b$ to be expressed in terms of $v,k,\lambda$:
\begin{equation}\label{1.2.0}
r=\frac{\lambda(v-1)}{k-1},\qquad b=\frac{\lambda v(v-1)}{k(k-1)}.
\end{equation}
Hence a $(v,b,r,k,\lambda)$ BIBD is also called a $(v,k,\lambda)$ BIBD.
We treat the $(0,1)$-matrix $A$ as being over $\f_2$ and let $R(A)$ denote the set of rows of $A$. Then $R(A)$ is a $(n,2\delta, w)_2$ code with minimum distance $2\delta$, where
\begin{equation}\label{1.3}
\begin{cases}
n=b=\displaystyle \frac{\lambda v(v-1)}{k(k-1)},\vspace{2mm}\cr
\delta=r-\lambda=\displaystyle \frac{\lambda(v-k)}{k-1},\vspace{2mm}\cr
w=r=\displaystyle \frac{\lambda(v-1)}{k-1}.
\end{cases}
\end{equation}

\begin{thm}[{\cite{Semakov-Zinov'ev-PIT-1969}, \cite[Theorem~2.4.12]{Tonchev-1988}}]\label{T1.1}
$C$ is an $(n,2\delta,w)_2$ code with $|C|=n\delta/(w^2-nw+n\delta)$ if and only if $C=R(D)$, where $D$ is the incidence matrix of a $(v,b,r,k,\lambda)$ BIBD whose parameters satisfy \eqref{1.3}. Hence the equality in \eqref{1.2} holds if and only if there exists a $(v,b,r,k,\lambda)$ BIBD whose parameters satisfy \eqref{1.3}.
\end{thm}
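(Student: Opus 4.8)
The plan is to prove the two implications of Theorem~\ref{T1.1} separately: the backward one is a direct computation, and the forward one comes from re-running the proof of the bound \eqref{1.2} while tracking its equality case. For the backward implication, let $D$ be a $(v,b,r,k,\lambda)$ BIBD whose parameters satisfy \eqref{1.3}, with incidence matrix $A$. Over $\mathbb{Z}$ one has $AA^{T}=(r-\lambda)I+\lambda J$, so the supports of any two distinct rows of $A$ meet in exactly $\lambda$ positions; since every row has weight $r$, any two distinct rows lie at Hamming distance $2(r-\lambda)=2\delta>0$, and in particular the $v$ rows are pairwise distinct. Hence $R(D)$ is an $(n,2\delta,w)_{2}$ code with exactly $v$ codewords (using $w=r$, $n=b$, $\delta=r-\lambda$ from \eqref{1.3}), and substituting \eqref{1.3} into $n\delta/(w^{2}-nw+n\delta)=n\delta/(w^{2}-n\lambda)$ returns the value $v$ after a routine simplification. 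So such a code meets \eqref{1.2}.

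For the forward implication, suppose $C$ is an $(n,2\delta,w)_{2}$ code with $M:=|C|=n\delta/(w^{2}-nw+n\delta)$ (so in particular $w^{2}-nw+n\delta>0$). I would re-derive \eqref{1.2} by double counting, keeping track of the equality cases. Put the codewords as the rows of an $M\times n$ matrix $A$ and let $s_{1},\dots,s_{n}$ be its column sums. Counting the $1$'s of $A$ gives $\sum_{\ell}s_{\ell}=Mw$, while for distinct $c,c'\in C$ constant weight $w$ together with $d(c,c')\ge 2\delta$ forces $|\operatorname{supp}(c)\cap\operatorname{supp}(c')|\le w-\delta$; counting, column by column, the pairs of codewords that are both $1$ in that column then gives
\[
\sum_{\ell=1}^{n}\binom{s_{\ell}}{2}=\sum_{\{c,c'\}\subseteq C}|\operatorname{supp}(c)\cap\operatorname{supp}(c')|\le\binom{M}{2}(w-\delta).
\]
On the other hand strict convexity of $x\mapsto\binom{x}{2}$, applied to $s_{1},\dots,s_{n}$ with mean $Mw/n$, gives $\sum_{\ell}\binom{s_{\ell}}{2}\ge n\binom{Mw/n}{2}$. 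Chaining these two estimates and clearing denominators yields precisely $M(w^{2}-nw+n\delta)\le n\delta$, i.e., \eqref{1.2}, and the hypothesis $M=n\delta/(w^{2}-nw+n\delta)$ is exactly the assertion that both estimates hold with equality. Equality in the convexity bound forces all $s_{\ell}$ equal, say to $k$, and equality in the support bound forces $|\operatorname{supp}(c)\cap\operatorname{supp}(c')|=\lambda:=w-\delta$ for all distinct $c,c'\in C$.

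Next I would repackage this rigidity as a design. Take the point set $V=C$ and, for each coordinate $\ell\in\{1,\dots,n\}$, the block $B_{\ell}=\{c\in C:\ell\in\operatorname{supp}(c)\}$, with $\mathcal{B}=\{B_{1},\dots,B_{n}\}$. By the previous step every block has exactly $k$ points, every point lies in $|\operatorname{supp}(c)|=w$ blocks, and every pair of points lies in exactly $\lambda$ blocks, so $D=(V,\mathcal{B})$ is a $(v,b,r,k,\lambda)$ BIBD with $(v,b,r,k,\lambda)=(M,n,w,k,w-\delta)$, and $C=R(D)$ by construction. Because \eqref{1.2.0} holds for the parameters of every BIBD, and since $n=b$, $w=r$, $\delta=r-\lambda$ for the present values, the identities \eqref{1.3} are then automatic. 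Combined with Theorem~\ref{T0}, this also gives the last sentence of the theorem.

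Once the equality analysis of the Johnson bound is set up, the rest is essentially bookkeeping, so the point that needs genuine care is the boundary: I must verify that the $D$ produced above is a bona fide BIBD, i.e., that $k=Mw/n$ is an integer (which it is, being a cardinality) with $1\le k<v$, and that $\lambda\ge 1$. This is where the implicit non-degeneracy of the setup enters (a nontrivial code, $0<\delta<w$); note that the same condition $k<v$, equivalently $\delta>0$, is what forces the rows of the incidence matrix to be distinct in the backward direction. I expect this edge-case check, rather than the double-counting step (which is the standard proof of \eqref{1.2}), to be the fussiest part of writing out a full proof.
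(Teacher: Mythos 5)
The paper does not prove Theorem~\ref{T1.1} at all --- it is quoted from \cite{Semakov-Zinov'ev-PIT-1969} and \cite[Theorem~2.4.12]{Tonchev-1988} --- so there is no internal proof to compare against; your argument is the standard one behind those references (double counting with Jensen to re-derive \eqref{1.2}, then reading off the equality cases to get constant column sums $k$ and constant pairwise support intersections $\lambda=w-\delta$, packaged as a design on the codewords with the coordinates as blocks), and it is correct, including the computation $n\delta/(w^2-n\lambda)=v$ in the backward direction. The only genuine caveat is the degenerate boundary ($\delta=w$, giving $\lambda=0$ and $k=1$, or $|C|=1$), where the statement as written fails under the paper's convention that all BIBD parameters are positive integers; you correctly identify this as the point needing care, and it is a defect of the theorem's phrasing rather than of your proof.
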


Let $G$ be a finite group acting 2-transitively on a finite set $V$ and let $B\subset V$ be such that $|B|\ge 2$. Let $\mathcal B=\{g(B):g\in G\}$ and let 
\[
G_B=\{g\in G:g(B)=B\}<G
\]
be the stabilizer of $B$ in $G$. It is quite obvious that $(V,\mathcal B)$ is a BIBD; its parameters are given by 
\begin{equation}\label{1.4}
\begin{cases}
v=|V|,\vspace{2mm}\cr
b=\displaystyle \frac{|G|}{|G_B|},\vspace{2mm}\cr
r=\displaystyle \frac{|B|}{|V|}\cdot\frac{|G|}{|G_B|},\vspace{2mm}\cr
k=|B|,\vspace{2mm}\cr
\lambda=\displaystyle \frac{|B|(|B|-1)}{|V|(|V|-1)}\cdot\frac{|G|}{|G_B|}.
\end{cases}
\end{equation}
With $G$ and $V$ given, the parameters of $(V,\mathcal B)$ are determined by $|B|$ and $|G_B|$. Therefore, an answer to the following question would determine all possible parameters of $(V,\mathcal B)$.

\begin{ques}\label{Q1.2}
Let $G$ be a finite groups acting on a finite set $V$. Let $S<G$ and $0\le k\le |V|$. Does there exist $B\subset V$ with $|B|=k$ such that $S=G_B$?
\end{ques}

The affine linear group $\text{AGL}(1,\f_q)$ acts (sharply) 2-transitively on $\f_q$. The main objective of the present paper is to answer Question~\ref{Q1.2} for this action. More precisely, we determine all subgroups of $\text{AGL}(1,\f_q)$, and for each $S<\text{AGL}(1,\f_q)$ and each $0\le k\le q$, we derive a sum that gives the number of $k$-element subsets $B$ of $\f_q$ such that $S=\text{AGL}(1,\f_q)_B$. Consequently, we are able to describe the possible parameters of the BIBDs constructed from the action of $\text{AGL}(1,\f_q)$ on $\f_q$. By Theorem~\ref{T1.1}, the corresponding values of the function $A_2(n,2\delta,w)$ are determined. More precisely, if there exist integers $k\ge 2$ and $s>0$ such that there are $S<\text{AGL}(a,\f_q)$ and $B\in\binom{\f_q}k$ with $|S|=s$ and $S=\text{AGL}(1,\f_q)_B$, then there is a $(q,k,k(k-1)/s)$ BIBD and hence 
\begin{equation}\label{1.6}
A_2\Bigl(\frac{q(q-1)}s,\,\frac{2k(q-k)}s,\,\frac{k(q-1)}s\Bigr)=q.
\end{equation}

Sun \cite{Sun-TJM-2010} constructed various subsets of $\f_q$ and determined their stabilizers in $\text{AGL}(1,\f_q)$. The approach of the present paper is a little different; we are interested in the existence of $B\in\binom{\f_q}k$, not its description, such that $\text{AGL}(1,\f_q)_B$ is a given subgroup of $\text{AGL}(1,\f_q)$. 

Question~\ref{Q1.2} has been considered for $\text{PGL}(2,\f_q)$ and $\text{PSL}(2,\f_q)$ acting on the projective line $\text{PG}(1,\f_q)$ by Cameron, Omidi and Tayfeh-Rezaie \cite{Cameron-Omidi-Tayfeh-Rezaie-E-JC-2006} and by Cameron, Maimani, Omidi and Tayfeh-Rezaie \cite{Cameron-Maimani-Omidi-Tayfeh-Rezaie-DM-2006}. The purpose there is to determine possible parameters of $3$-designs admitting $\text{PGL}(2,\f_q)$ or $\text{PSL}(2,\f_q)$ as an automorphism group. 

Here is the outline of the paper. In Section~2, we outline a strategy for solving Question~\ref{Q1.2}. Section~3 provides the necessary background on $\text{AGL}(1,\f_q)$ and its subgroups. Section~4 contains the main results of the paper. Theorems~\ref{T4.2} and \ref{T4.3} give formulas for the function $\mathcal N(S,k)=|\{B\in\binom{\f_q}k:S=\text{AGL}(1,\f_q)_B\}|$, where $S<\text{AGL}(1,\f_q)$ and $0\le k\le q$. In Section~5, one finds a description of the input data of the aforementioned formulas. Section~6 contains some additional remarks about Question~\ref{Q1.2} and our formulas. It is observed that our numerical results are consistent with the existing theoretic results. The section also includes several open questions. The appendix supplies the reader with a Mathematica code for computing the function $\mathcal N(S,k)$ and a table of values of $\mathcal N(S,k)$ with $q\le 101$. ({\bf Caution:} The table is very long.)

\section{A Strategy for Solving Question~\ref{Q1.2}}

Let $G$ be a finite group acting on a finite set $V$. Let $S<G$, $0\le k\le |V|$, and $\binom Vk=\{B\subset V:|B|=k\}$. Define
\begin{equation}\label{2.1}
S'=\Bigl\{B\in\binom Vk:\sigma(B)=B\ \text{for all}\ \sigma\in S\Bigr\}.
\end{equation}
(We remind the reader that the operation $(\ )'$ depends on $k$.) Note that a subset $B$ of $V$ belongs to $S'$ if and only if $B$ is a union of $S$-orbits in $V$ with $|B|=k$. If the sizes of the $S$-orbits are known, then $|S'|$ is easily determined. Let $S_1,\dots,S_t$ be the minimal elements of $\{T:S\lneq T<G\}$; these are immediate supergroups of $S$ in $G$. Then by the inclusion-exclusion principle, 
\begin{align}\label{2.2}
&\Bigl|\Bigl\{B\in\binom Vk:S=G_B\Bigr\}\Bigr|\\
=\,&|S'|-|S_1'\cup\cdots\cup S_t'|\cr
=\,&|S'|-\sum_{l=1}^t(-1)^{l-1}\sum_{1\le i_1<\cdots<i_l\le t}|S_{i_1}'\cap\cdots\cap S_{i_l}'|\cr
=\,&|S'|+\sum_{l=1}^t(-1)^{l}\sum_{1\le i_1<\cdots<i_l\le t}|\langle S_{i_1}\cup\cdots\cup S_{i_l}\rangle'|\cr
=\,&\sum_{l=0}^t(-1)^l\sum_{1\le i_1<\cdots<i_l\le t}|\langle S\cup S_{i_1}\cup\cdots\cup S_{i_l}\rangle'|,\nonumber
\end{align}
where $\langle\ \rangle$ denotes the subgroup generated by a subset. The answer to Question~\ref{Q1.2} is affirmative if and only if the sum in \eqref{2.2} is nonzero. However, to make this sum computable (for all $S$ and $k$), one needs the following data: (i) an enumeration of all subgroups of $G$ and a formula for $|S'|$ for each $S<G$; (ii) for each $S<G$, an enumeration of all immediate supergroups of $S$; (iii) a description of the subgroup generated by any set of immediate supergroups of a subgroup $S$. In the next two sections, we gather these data for $\text{AGL}(1,\f_q)$ acting on $\f_q$.

\section{The Affine Linear Group $\text{AGL}(1,\f_q)$}

\subsection{$\text{AGL}(1,\f_q)$}\

The affine linear group of dimension 1 over $\f_q$ is defined as 
\begin{equation}\label{3.1}
\text{AGL}(1,\f_q)=\Bigl\{ \left[\begin{matrix} a&b\cr 0&1\end{matrix}\right]:a\in\f_q^*,\ b\in\f_q\Bigr\}<\text{GL}(2,\f_q).
\end{equation}
The action of $\text{AGL}(1,\f_q)$ on $\f_q$ is as follows: For $\left[\begin{smallmatrix} a&b\cr 0&1\end{smallmatrix}\right]\in\text{AGL}(1,\f_q)$ and $x\in\f_q$,
\begin{equation}\label{3.2}
\left[\begin{matrix} a&b\cr 0&1\end{matrix}\right]x=ax+b.
\end{equation}
This is clearly a sharply 2-transitive action. Let
\begin{equation}\label{3.3}
A=\Bigl\{ \left[\begin{matrix} a&0\cr 0&1\end{matrix}\right]:a\in\f_q^*\Bigr\}\cong\f_q^*,
\end{equation}
\begin{equation}\label{3.4}
B=\Bigl\{ \left[\begin{matrix} 1&b\cr 0&1\end{matrix}\right]:b\in\f_q\Bigr\}\cong\f_q.
\end{equation}
Then we have
\begin{equation}\label{3.5}
\text{AGL}(1,\f_q)=A\ltimes B.
\end{equation}
We list a few formulas that are frequently used as we proceed:
\begin{equation}\label{3.6}
\left[\begin{matrix} a&b\cr 0&1\end{matrix}\right]^{-1}=\left[\begin{matrix} a^{-1}&-a^{-1}b\cr 0&1\end{matrix}\right],
\end{equation}
\begin{equation}\label{3.7}
\left[\begin{matrix} a&b\cr 0&1\end{matrix}\right]^l=\left[\begin{matrix} a^l&\displaystyle\frac{a^l-1}{a-1}b\vspace{2mm}\cr 0&1\end{matrix}\right],\quad l\in\Bbb Z,\quad \text{($\frac{a^l-1}{a-1}$ interpreted as $l$ for $a=1$)},
\end{equation}
\begin{equation}\label{3.8}
\left[\begin{matrix} a_1&(a_1-1)b\cr 0&1\end{matrix}\right]\left[\begin{matrix} a_2&(a_2-1)b\cr 0&1\end{matrix}\right]=\left[\begin{matrix} a_1a_2&(a_1a_2-1)b\cr 0&1\end{matrix}\right].
\end{equation}

\subsection{Subgroups of $\text{AGL}(1,\f_q)$}\label{s3.2}\

For each $H<\f_q$, define
\begin{equation}\label{3.9}
\overline H=\Bigl\{ \left[\begin{matrix} 1&h\cr 0&1\end{matrix}\right]:h\in H\Bigr\}.
\end{equation}
Let $p=\text{char}\,\f_q$. Let $\mathcal S$ be the set of triples $(a,b,H)$, where $a\in\f_q^*$, $H$ is an $\f_p(a)$-subspace of $\f_q$, and
\[
b
\begin{cases}
=0&\text{if}\ a=1,\cr
\in\f_q&\text{if}\ a\ne 1.
\end{cases}
\]
For $(a,b,H)\in\mathcal S$, let 
\begin{equation}\label{3.10}
S(a,b,H)=\Bigl\langle\Bigl\{ \left[\begin{matrix} a&b\cr 0&1\end{matrix}\right]\Bigr\}\cup\overline H\Bigr\rangle<\text{AGL}(1,\f_q).
\end{equation}
Note that for $h\in H$,
\begin{equation}\label{3.10.0}
\left[\begin{matrix} a&b\cr 0&1\end{matrix}\right]\left[\begin{matrix} 1&h\cr 0&1\end{matrix}\right]\left[\begin{matrix} a&b\cr 0&1\end{matrix}\right]^{-1}=\left[\begin{matrix} 1&ah\cr 0&1\end{matrix}\right]\in\overline H
\end{equation}
since $ah\in H$. Moreover, by \eqref{3.7},
\[
\Bigl\langle\left[\begin{matrix} a&b\cr 0&1\end{matrix}\right]\Bigr\rangle\cap\overline H=\Bigl\{\left[\begin{matrix} 1&0\cr 0&1\end{matrix}\right]\Bigr\}.
\]
Hence
\begin{equation}\label{3.11}
S(a,b,H)=\Bigl\langle\left[\begin{matrix} a&b\cr 0&1\end{matrix}\right]\Bigr\rangle\ltimes\overline H.
\end{equation}
\eqref{3.7} also implies that
\begin{equation}\label{3.12}
o\Bigl(\left[\begin{matrix} a&b\cr 0&1\end{matrix}\right]\Bigr)=o(a),
\end{equation}
where $o(\ )$ denotes the order of an element in a group. Thus
\begin{equation}\label{3.13}
|S(a,b,H)|=o(a)|H|.
\end{equation}

\begin{thm}\label{T3.1}
Every subgroup of $\text{\rm AGL}(1,\f_q)$ is of the form $S(a,b,H)$ for some $(a,b,H)\in\mathcal S$.
\end{thm}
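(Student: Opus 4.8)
The plan is to take an arbitrary subgroup $G<\text{AGL}(1,\f_q)$ and exhibit a triple $(a,b,H)\in\mathcal S$ with $G=S(a,b,H)$. First I would look at the projection $\pi:\text{AGL}(1,\f_q)\to\f_q^*$ sending $\left[\begin{smallmatrix}a&b\cr0&1\end{smallmatrix}\right]\mapsto a$; its kernel is $B\cong\f_q$. Set $N=G\cap B$; since every subgroup of $(\f_q,+)$ is an $\f_p$-subspace, $N=\overline H_0$ for some $\f_p$-subspace $H_0$ of $\f_q$. The image $\pi(G)$ is a subgroup of the cyclic group $\f_q^*$, hence cyclic, say generated by some $a\in\f_q^*$ of order $o(a)=|\pi(G)|$. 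Pick any element $g=\left[\begin{smallmatrix}a&b\cr0&1\end{smallmatrix}\right]\in G$ mapping onto this generator (if $a=1$ then $\pi(G)$ is trivial, $G=N=\overline H_0$, and we take $b=0$).

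Next I would show $G=\langle g\rangle\ltimes N$. Since $G/N\cong\pi(G)=\langle a\rangle$ is generated by the image of $g$, every element of $G$ is $g^l n$ for some integer $l$ and $n\in N$, so $G=\langle g\rangle N$; and $\langle g\rangle\cap N=\langle g\rangle\cap B=\{1\}$ by \eqref{3.7} (the powers of $g$ have first entry $a^l$, which is $1$ only when $g^l=1$). Normality of $N$ in $G$ is automatic since $N=G\cap B$ and $B\trianglelefteq\text{AGL}(1,\f_q)$. Thus $G=\langle g\rangle\ltimes\overline H_0$, which already has the shape of \eqref{3.11}, and by construction $G=S(a,b,H_0)=\langle\{g\}\cup\overline H_0\rangle$.

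The one remaining point — and I expect this to be the only real obstacle — is to verify that the triple $(a,b,H_0)$ actually lies in $\mathcal S$, i.e.\ that $H_0$ is not merely an $\f_p$-subspace but an $\f_p(a)$-subspace of $\f_q$. This is forced by the conjugation identity \eqref{3.10.0}: for $h\in H_0$ we have $g\left[\begin{smallmatrix}1&h\cr0&1\end{smallmatrix}\right]g^{-1}=\left[\begin{smallmatrix}1&ah\cr0&1\end{smallmatrix}\right]$, and since $N=\overline H_0\trianglelefteq G$ this element lies in $\overline H_0$, so $ah\in H_0$. Hence $H_0$ is closed under multiplication by $a$, and being already an additive $\f_p$-subspace it is stable under the subring $\f_p[a]=\f_p(a)\subseteq\f_q$; that is, $H_0$ is an $\f_p(a)$-subspace. (The case $a=1$ is trivial, as then $\f_p(a)=\f_p$ and any $\f_p$-subspace qualifies.) Therefore $(a,b,H_0)\in\mathcal S$ and $G=S(a,b,H_0)$, completing the proof.
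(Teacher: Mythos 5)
Your proof is correct and follows essentially the same route as the paper: project onto $\f_q^*$, identify the kernel intersected with the subgroup as $\overline H$ for an additive subgroup $H$, lift a generator of the cyclic image, and use the conjugation identity \eqref{3.10.0} to see that $H$ is an $\f_p(a)$-subspace. The only cosmetic difference is that the paper disposes of the $a=1$ case by noting $S/\overline H$ would be a $p$-group of order dividing $q-1$, whereas you simply observe the image is trivial; both are fine.
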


\begin{proof}
Let $S<\text{AGL}(1,\f_q)$. Let $\phi:S\to\text{AGL}(1,\f_q)/B$ be the composition of the following homomorphisms
\[
S\hookrightarrow \text{AGL}(1,\f_q)\longrightarrow \text{AGL}(1,\f_q)/B\cong\f_q^*,
\]
where $B$ is given in \eqref{3.4}. Then $\phi$ induces an embedding 
\begin{equation}\label{3.14}
S/\ker \phi\hookrightarrow \text{AGL}(1,\f_q)/B\cong\f_q^*.
\end{equation}
Since $\ker\phi\subset B$, we have $\ker\phi=\overline H$ for some $H<\f_q$. By \eqref{3.14}, $S/\overline H$ is cyclic with $|S/\overline H|\mid q-1$. Write
\begin{equation}\label{3.15}
S/\overline H=\Bigl\langle\left[\begin{matrix} a&b\cr 0&1\end{matrix}\right]\Bigr\rangle, \qquad\text{where}\ \left[\begin{matrix} a&b\cr 0&1\end{matrix}\right]\in S.
\end{equation}
Since 
\[
\left[\begin{matrix} a&b\cr 0&1\end{matrix}\right]\overline H\left[\begin{matrix} a&b\cr 0&1\end{matrix}\right]^{-1}=\overline H.
\]
It follows from \eqref{3.10.0} that $aH=H$. Hence $H$ is an $\f_p(a)$-subspace of $\f_q$. If $a=1$, by \eqref{3.15}, $S/\overline H$ is a $p$-group. Then $|S/\overline H|=1$, and hence $S=\overline H=S(1,0,H)$, where $(1,0,H)\in\mathcal S$. If $a\ne 1$, then $(a,b,H)\in\mathcal S$ and 
\[
S=\Bigl\langle\Bigl\{ \left[\begin{matrix} a&b\cr 0&1\end{matrix}\right]\Bigr\}\cup \overline H\Bigr\rangle=S(a,b,H).
\]
\end{proof}

We fix a generator $\gamma$ of $\f_q^*$ and let $\Gamma=\{\gamma^i: i\mid q-1\}$. Each subgroup of $\f_q^*$ has a unique generator in $\Gamma$.

\begin{prop}\label{P3.2}
\begin{itemize}
\item[(i)] For each $(a,b,H)\in\mathcal S$, there exists $(a',b',H)\in\mathcal S$ with $a'\in\Gamma$ such that $S(a,b,H)=S(a',b',H)$.

\item[(ii)] Let $(a_1,b_1,H_1),(a_2,b_2,H_2)\in\mathcal S$ be such that $a_1,a_2\in\Gamma$. Then $S(a_1,b_1,H_1)=S(a_2,b_2,H_2)$ if and only if $H_1=H_2$, $a_1=a_2$, and $b_1\equiv b_2\pmod{H_1}$.

\item[(iii)] For each $(a,b,H)\in\mathcal S$, there exists $c\in\f_q$ such that 
\begin{equation}\label{3.16}
\left[\begin{matrix} 1&c\cr 0&1\end{matrix}\right]S(a,b,H)\left[\begin{matrix} 1&c\cr 0&1\end{matrix}\right]^{-1}=S(a,0,H).
\end{equation}
\end{itemize}
\end{prop}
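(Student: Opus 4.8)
The plan is to prove the three parts in order, each by direct computation with the matrices in $\text{AGL}(1,\f_q)$, using the normal form \eqref{3.11} heavily.

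For part (i), I would start from a given $(a,b,H)\in\mathcal S$. If $a=1$ there is nothing to do, so assume $a\neq 1$; then $a$ lies in some cyclic subgroup $\langle a'\rangle=\langle a\rangle$ of $\f_q^*$ with $a'\in\Gamma$, so $a=(a')^j$ for some $j$ coprime to $o(a)$. The idea is that $H$ is automatically an $\f_p(a)$-subspace, and since $\f_p(a')=\f_p(a)$ (both generate the same subfield because $a$ and $a'$ generate the same cyclic group, hence the same subfield of $\f_q$), $H$ is also an $\f_p(a')$-subspace. Now I need a $b'$ so that $\left[\begin{smallmatrix} a'&b'\cr 0&1\end{smallmatrix}\right]\in S(a,b,H)$ and generates $S(a,b,H)/\overline H$. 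Working modulo $\overline H$, the group $S(a,b,H)/\overline H$ is cyclic generated by the image of $\left[\begin{smallmatrix} a&b\cr 0&1\end{smallmatrix}\right]$, and raising to a suitable power (an inverse of $j$ modulo $o(a)$) produces an element whose upper-left entry is $a'$; using \eqref{3.7} this element is $\left[\begin{smallmatrix} a'& \frac{a'-1}{a-1}\,b\cr 0&1\end{smallmatrix}\right]$ up to powers — more carefully, $\left[\begin{smallmatrix} a&b\cr 0&1\end{smallmatrix}\right]^l = \left[\begin{smallmatrix} a^l&\frac{a^l-1}{a-1}b\cr 0&1\end{smallmatrix}\right]$, so choosing $l$ with $a^l=a'$ gives the required $b'$. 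Then $S(a',b',H)\subseteq S(a,b,H)$ and they have the same order by \eqref{3.13} (since $o(a')=o(a)$), so they are equal.

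For part (ii), the ``if'' direction is immediate: if $a_1=a_2$, $H_1=H_2$, and $b_1-b_2\in H_1$, then $\left[\begin{smallmatrix} a_2&b_2\cr 0&1\end{smallmatrix}\right]=\left[\begin{smallmatrix} 1&b_2-b_1\cr 0&1\end{smallmatrix}\right]\left[\begin{smallmatrix} a_1&b_1\cr 0&1\end{smallmatrix}\right]\in S(a_1,b_1,H_1)$, so the generating sets coincide. For ``only if'': the normal form \eqref{3.11} shows $\overline{H_i}$ is the subgroup of $S(a_i,b_i,H_i)$ consisting of all elements with upper-left entry $1$ — this is the kernel of the map to $\f_q^*$ — hence $H_1=H_2=:H$ from equality of the groups. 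Passing to the quotient by $\overline H$ and using that both $a_1,a_2\in\Gamma$ generate the same cyclic quotient $S/\overline H\cong\langle a_1\rangle=\langle a_2\rangle\leq\f_q^*$, uniqueness of the generator in $\Gamma$ forces $a_1=a_2$. Finally, $\left[\begin{smallmatrix} a_1&b_1\cr 0&1\end{smallmatrix}\right]$ and $\left[\begin{smallmatrix} a_1&b_2\cr 0&1\end{smallmatrix}\right]$ represent the same coset of $\overline H$ in $S/\overline H$ only if they differ by an element of $\overline H$ (after possibly adjusting by a power — but since both have upper-left entry exactly $a_1$ and the quotient element of order $o(a_1)$ is determined, the relevant power is $1$), i.e. $b_1-b_2\in H$.

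For part (iii), this is a conjugation computation. Take $a\neq 1$ (the case $a=1$ forces $b=0$ and the statement is trivial). Conjugating the generator, $\left[\begin{smallmatrix} 1&c\cr 0&1\end{smallmatrix}\right]\left[\begin{smallmatrix} a&b\cr 0&1\end{smallmatrix}\right]\left[\begin{smallmatrix} 1&c\cr 0&1\end{smallmatrix}\right]^{-1}=\left[\begin{smallmatrix} a& b+c-ac\cr 0&1\end{smallmatrix}\right]=\left[\begin{smallmatrix} a& b+(1-a)c\cr 0&1\end{smallmatrix}\right]$, so I want $c$ with $b+(1-a)c=0$, namely $c=b/(a-1)$, which exists since $a\neq 1$. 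Conjugation by $\left[\begin{smallmatrix} 1&c\cr 0&1\end{smallmatrix}\right]$ also fixes $\overline H$ elementwise up to membership (indeed $\left[\begin{smallmatrix} 1&c\cr 0&1\end{smallmatrix}\right]$ centralizes $B$, so it fixes $\overline H$ pointwise), hence conjugation sends the generating set $\{\left[\begin{smallmatrix} a&b\cr 0&1\end{smallmatrix}\right]\}\cup\overline H$ to $\{\left[\begin{smallmatrix} a&0\cr 0&1\end{smallmatrix}\right]\}\cup\overline H$, giving \eqref{3.16}.

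The main obstacle I anticipate is the bookkeeping in parts (i) and (ii) around passing between an element of $S$ and the generator of the cyclic quotient $S/\overline H$: one must be careful that replacing $\left[\begin{smallmatrix} a&b\cr 0&1\end{smallmatrix}\right]$ by a power to change the upper-left entry to an element of $\Gamma$ does not spoil the property that the resulting element together with $\overline H$ still generates all of $S$, and that the ambiguity in $b$ modulo $H$ is exactly pinned down. This is handled by the order count \eqref{3.13} together with the observation $o(a')=o(a)$ and $\f_p(a')=\f_p(a)$, which guarantees $(a',b',H)\in\mathcal S$ and that no elements are lost. Everything else reduces to the explicit formulas \eqref{3.6}–\eqref{3.8}.
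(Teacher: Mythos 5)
Your proposal is correct and follows essentially the same route as the paper: part (i) by raising $\left[\begin{smallmatrix} a&b\cr 0&1\end{smallmatrix}\right]$ to a power $l$ with $\gcd(l,o(a))=1$ and $a^l=a'\in\Gamma$, part (ii) by recovering $H$ from the normal form \eqref{3.11} (the paper identifies $\overline H$ as the unique Sylow $p$-subgroup rather than as the kernel of the projection to $\f_q^*$, a cosmetic difference), and part (iii) by the same conjugation with $c=b/(a-1)$. The only loosely worded step, pinning $b_1\equiv b_2\pmod{H_1}$ in part (ii) via the exponent $l=1$, is exactly the argument the paper makes precise using \eqref{3.7}, so there is no gap.
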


\begin{proof}
(i) There exists $a'\in\Gamma$ such that $\langle a\rangle=\langle a'\rangle$, that is, $a'=a^l$ for some $l\in\Bbb Z$ with $\text{gcd}(l,o(a))=1$. Write 
\[
\left[\begin{matrix} a&b\cr 0&1\end{matrix}\right]^l=\left[\begin{matrix} a'&b'\cr 0&1\end{matrix}\right].
\]
Then clearly, $(a',b',H)\in\mathcal S$ and $S(a,b,H)=S(a',b',H)$.

\medskip
(ii) ($\Leftarrow$) Since 
\[
\left[\begin{matrix} a_1&b_1\cr 0&1\end{matrix}\right]^{-1}\left[\begin{matrix} a_1&b_2\cr 0&1\end{matrix}\right]=\left[\begin{matrix} 1&(b_2-b_1)/a_1\cr 0&1\end{matrix}\right]\in\overline H,
\]
it is clear that $S(a_1,b_1,H)=S(a_1,b_2,H)$.

\medskip
($\Rightarrow$) For $i=1,2$, $\overline H_i$ is the unique Sylow $p$-subgroup of $S(a_i,b_i,H_i)$. Since $S(a_1,b_1,H_1)=S(a_2,b_2,H_2)$, we have $\overline H_1=\overline H_2$ and hence $H_1=H_2$. It follows from $|S(a_1,b_1,H_1)|=|S(a_2,b_2,H_1)|$ that $o(a_1)=o(a_2)$. Since $a_1,a_2\in\Gamma$, we must have $a_1=a_2$. Since 
\[
\Bigl\langle\left[\begin{matrix} a_1&b_1\cr 0&1\end{matrix}\right]\overline H_1\Bigr\rangle=S(a_1,b_1,H_1)/\overline H_1= S(a_1,b_2,H_1)/\overline H_1=\Bigl\langle\left[\begin{matrix} a_1&b_2\cr 0&1\end{matrix}\right]\overline H_1\Bigr\rangle,
\]
there exists $0<l\le o(a_1)$ such that 
\[
\left[\begin{matrix} a_1&b_1\cr 0&1\end{matrix}\right]^l\overline H_1=\left[\begin{matrix} a_1&b_2\cr 0&1\end{matrix}\right]\overline H_1.
\]
It follows that $a_1^l=a_1$ and hence $l=1$. Now from 
\[
\left[\begin{matrix} a_1&b_1\cr 0&1\end{matrix}\right]\overline H_1=\left[\begin{matrix} a_1&b_2\cr 0&1\end{matrix}\right]\overline H_1
\]
we have $b_1\equiv b_2\pmod{H_1}$.

\medskip
(iii) Let
\[
c=\begin{cases}
0&\text{if}\ a=1,\vspace{2mm}\cr
\displaystyle\frac b{a-1}&\text{if}\ a\ne 1.
\end{cases}
\]
Then
\[
\left[\begin{matrix} 1&c\cr 0&1\end{matrix}\right]\left[\begin{matrix} a&b\cr 0&1\end{matrix}\right]\left[\begin{matrix} 1&c\cr 0&1\end{matrix}\right]^{-1}=\left[\begin{matrix} a&(1-a)c+b\cr 0&1\end{matrix}\right]=\left[\begin{matrix} a&0\cr 0&1\end{matrix}\right].
\]
Of course,
\[
\left[\begin{matrix} 1&c\cr 0&1\end{matrix}\right]\overline H\left[\begin{matrix} 1&c\cr 0&1\end{matrix}\right]^{-1}=\overline H.
\]
Hence \eqref{3.16} holds.
\end{proof}

\begin{prop}\label{P3.3}
Let $(a,0,H)\in\mathcal S$, $a\in\Gamma$.
\begin{itemize}
\item[(i)]
If $a=1$, the subgroups of $\text{\rm AGL}(1,\f_q)$ that contain $S(1,0,H)=\overline H$ are precisely $S(a_1,b_1,H_1)$, where $(a_1,b_1,H_1)\in\mathcal S$, $a_1\in\Gamma$, $H\subset H_1$.

\item[(ii)]
If $a\ne 1$, the subgroups of $\text{\rm AGL}(1,\f_q)$ that contain $S(a,0,H)$ are precisely $S(a_1,0,H_1)$, where $(a_1,0,H_1)\in\mathcal S$, $a_1\in\Gamma$, $a\in\langle a_1\rangle$, $H\subset H_1$.
\end{itemize}
\end{prop}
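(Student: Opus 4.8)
The plan is to first use Theorem~\ref{T3.1} and Proposition~\ref{P3.2}(i) to put an arbitrary subgroup of $\text{AGL}(1,\f_q)$ in the form $S(a_1,b_1,H_1)$ with $(a_1,b_1,H_1)\in\mathcal S$ and $a_1\in\Gamma$, and then to read off, from $(a_1,b_1,H_1)$ alone, exactly when $S(a,0,H)\subset S(a_1,b_1,H_1)$. Two features of $S(a_1,b_1,H_1)$ carry the argument. First, by \eqref{3.11} and \eqref{3.13} we have $|S(a_1,b_1,H_1)|=o(a_1)|H_1|$ with $o(a_1)\mid q-1$ coprime to $p$ and $|H_1|$ a power of $p$; since $\overline H_1\trianglelefteq S(a_1,b_1,H_1)$ by \eqref{3.11}, it is the unique Sylow $p$-subgroup, and in fact $S(a_1,b_1,H_1)\cap B=\overline H_1$ (the elements of $S(a_1,b_1,H_1)$ lying in $B$ are precisely those of the form $\left[\begin{smallmatrix}a_1&b_1\cr 0&1\end{smallmatrix}\right]^l\left[\begin{smallmatrix}1&h\cr 0&1\end{smallmatrix}\right]$ with $a_1^l=1$, $h\in H_1$). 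Second, the composite $S(a_1,b_1,H_1)\hookrightarrow\text{AGL}(1,\f_q)\to\text{AGL}(1,\f_q)/B\cong\f_q^*$ has kernel $\overline H_1$ and image $\langle a_1\rangle$.

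For part~(i): if $\overline H\subset S(a_1,b_1,H_1)$, then $\overline H=\overline H\cap B\subset S(a_1,b_1,H_1)\cap B=\overline H_1$, so $H\subset H_1$; conversely $H\subset H_1$ gives $\overline H\subset\overline H_1\subset S(a_1,b_1,H_1)$. Together with Theorem~\ref{T3.1} and Proposition~\ref{P3.2}(i), this yields the claimed list.

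For part~(ii): assume $S(a,0,H)\subset S(a_1,b_1,H_1)$ with $a\ne 1$ and $a_1\in\Gamma$. As in (i), $H\subset H_1$. Applying the projection onto $\f_q^*$ to $\left[\begin{smallmatrix}a&0\cr 0&1\end{smallmatrix}\right]$ shows $a\in\langle a_1\rangle$, and in particular $a_1\ne 1$. It then suffices to show $b_1\equiv 0\pmod{H_1}$, since Proposition~\ref{P3.2}(ii) will give $S(a_1,b_1,H_1)=S(a_1,0,H_1)$. Using the factorization \eqref{3.11} and formula \eqref{3.7}, write $\left[\begin{smallmatrix}a&0\cr 0&1\end{smallmatrix}\right]=\left[\begin{smallmatrix}a_1&b_1\cr 0&1\end{smallmatrix}\right]^l\left[\begin{smallmatrix}1&h\cr 0&1\end{smallmatrix}\right]$ with $a_1^l=a$ and $h\in H_1$; comparing $(1,2)$-entries gives $h=-\frac{a-1}{a(a_1-1)}b_1$. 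Since $a=a_1^l\in\f_p(a_1)$, the scalar $\frac{a-1}{a(a_1-1)}$ lies in $\f_p(a_1)^*$, so from $h\in H_1$ and the fact that $H_1$ is an $\f_p(a_1)$-subspace we conclude $b_1\in H_1$. Conversely, given $(a_1,0,H_1)\in\mathcal S$ with $a_1\in\Gamma$, $a\in\langle a_1\rangle$ and $H\subset H_1$: writing $a=a_1^l$ we get $\left[\begin{smallmatrix}a&0\cr 0&1\end{smallmatrix}\right]=\left[\begin{smallmatrix}a_1&0\cr 0&1\end{smallmatrix}\right]^l\in S(a_1,0,H_1)$ by \eqref{3.7}, and $\overline H\subset\overline H_1\subset S(a_1,0,H_1)$, hence $S(a,0,H)=\langle\{\left[\begin{smallmatrix}a&0\cr 0&1\end{smallmatrix}\right]\}\cup\overline H\rangle\subset S(a_1,0,H_1)$.

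The only step needing real care is forcing $b_1\in H_1$ in part~(ii): one must extract the precise scalar relating $h$ to $b_1$ from \eqref{3.7} and then use both $a\in\langle a_1\rangle\subset\f_p(a_1)$ and the $\f_p(a_1)$-linearity of $H_1$. The remaining verifications are routine manipulations with \eqref{3.7}, \eqref{3.11}, \eqref{3.13}, Theorem~\ref{T3.1}, and Proposition~\ref{P3.2}.
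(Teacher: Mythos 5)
Your proof is correct and follows essentially the same route as the paper: $H\subset H_1$ is forced because $\overline H_1$ is the unique Sylow $p$-subgroup (equivalently, $S(a_1,b_1,H_1)\cap B=\overline H_1$), and $b_1\in H_1$ is extracted by writing $\left[\begin{smallmatrix}a&0\cr 0&1\end{smallmatrix}\right]$ in the coset $\left[\begin{smallmatrix}a_1&b_1\cr 0&1\end{smallmatrix}\right]^l\overline H_1$ via \eqref{3.7} and using the $\f_p(a_1)$-linearity of $H_1$; your scalar $-\frac{a-1}{a(a_1-1)}$ and the paper's $\frac{a-1}{a_1-1}$ differ only by the nonzero factor $-1/a\in\f_p(a_1)$, so the conclusion is identical. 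The only (harmless) addition is that you make explicit, via the quotient map to $\f_q^*$, that $a\in\langle a_1\rangle$, which the paper leaves implicit in the step $a_1^l=a$.
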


\begin{proof}
(i) is obvious.

\medskip
(ii) First assume that $(a_1,0,H_1)\in\mathcal S$ is such that $a\in\langle a_1\rangle$ and $H\subset H_1$. Then $\overline H\subset\overline H_1$ and $\left[\begin{smallmatrix} a&0\cr 0&1\end{smallmatrix}\right]\in\langle \left[\begin{smallmatrix} a_1&0\cr 0&1\end{smallmatrix}\right]\rangle$. Hence $S(a,0,H)\subset S(a_1,0,H_1)$.

Now assume that $S(a,0,H)\subset S(a_1,b_1,H_1)$, where $(a_1,b_1,H_1)\in\mathcal S$, $a_1\in\Gamma$. Since $\overline H$ is a $p$-subgroup of $S(a_1,b_1,H_1)$, we have $\overline H\subset\overline H_1$, and hence $H\subset H_1$. Since $\left[\begin{smallmatrix} a&0\cr 0&1\end{smallmatrix}\right]\in S(a_1,b_1,H_1)$, there exists integer $l>0$ such that
\begin{equation}\label{3.17}
\left[\begin{matrix} a&0\cr 0&1\end{matrix}\right]\in \left[\begin{matrix} a_1&b_1\cr 0&1\end{matrix}\right]^l\overline H_1.
\end{equation}
Then $a_1^l=a$ and \eqref{3.17} becomes
\[
\left[\begin{matrix} a&0\cr 0&1\end{matrix}\right]\in\left[\begin{matrix} a&b_1(a-1)/(a_1-1)\cr 0&1\end{matrix}\right]\overline H_1.
\]
Thus $b_1(a-1)/(a_1-1)\in H_1$, i.e., $b_1\in H_1$. We then have $S(a_1,b_1,H_1)=S(a_1,0,H_1)$.
\end{proof}

\subsection{Orbits of $S(a,b,H)$}\label{s3.3}\

Let $(a,b,H)\in\mathcal S$. If $a=1$, the orbits of $S(1,0,H)=\overline H$ in $\f_q$ are clearly the cosets of $H$ in $\f_q$.

Now assume that $a\ne 1$. Let $\sigma=\left[\begin{smallmatrix} a&b\cr 0&1\end{smallmatrix}\right]$. If $x,y\in\f_q$ are such that $x\equiv y\pmod H$, then $\sigma(x)-\sigma(y)=a(x-y)\equiv 0\pmod H$. Moreover, for all $\beta\in\overline H$ and $x\in\f_q$, $\beta x\equiv x\pmod H$. Hence the action of $S(a,b,H)$ on $\f_q$ induces an action of $S(a,b,H)/\overline H\cong\langle\sigma\rangle$ on $\f_q/H$. For all $0\le l<o(a)$ and $x\in\f_q$,
\[
\sigma^lx=a^lx+b(a^l-1)/(a-1).
\]
Note that $\sigma^lx\equiv x\pmod H$ if and only if $x\equiv-b/(a-1)\pmod H$. Therefore, the $\langle\sigma\rangle$-orbits in $\f_q/H$ containing $x+H$ is 
\[
\begin{cases}
\{x+H\}&\text{if}\ x+H=-b/(a-1)+H,\cr
\{a^lx+b(a^l-1)/(a-1)+H: 0\le l<o(a)\}&\text{otherwise}.
\end{cases}
\]
Consequently, the $S(a,b,H)$-orbit in $\f_q$ containing $x$ is
\[
\begin{cases}
x+H&\text{if}\ x\equiv-b/(a-1)\pmod H,\cr
\bigcup_{0\le l<o(a)}\bigl(a^lx+b(a^l-1)/(a-1)+H\bigr)&\text{otherwise}.
\end{cases}
\]
Note that $S(a,b,H)$ has one orbit of size $|H|$ and $(q-|H|)/o(a)|H|$ orbits of size $o(a)|H|$ in $\f_q$.

\begin{cor}\label{C3.4}
Let $0\le k\le q$ be fixed and $(\ )'$ be defined in \eqref{2.1}. Let $(a,b,H)\in\mathcal S$.
\begin{itemize}
\item[(i)]
If $a=1$, 
\begin{equation}\label{3.18}
|S(1,0,H)'|=
\begin{cases}
\displaystyle \binom{q/|H|}{k/|H|}&\text{if}\ k\equiv 0\pmod{|H|},\vspace{2mm}\cr
0&\text{otherwise}.
\end{cases}
\end{equation}

\item[(ii)] 
If $a\ne 1$,
\begin{equation}\label{3.19}
|S(a,b,H)'|=
\begin{cases}
\displaystyle \binom{(q-|H|)/o(a)|H|}{k/o(a)|H|}&\text{if}\ k\equiv 0\pmod{o(a)|H|},\vspace{2mm}\cr
\displaystyle \binom{(q-|H|)/o(a)|H|}{(k-|H|)/o(a)|H|}&\text{if}\ k\equiv |H|\pmod{o(a)|H|},\vspace{2mm}\cr
0&\text{otherwise}.
\end{cases}
\end{equation}
\end{itemize}
\end{cor}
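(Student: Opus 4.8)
The plan is to reduce everything to the observation recorded immediately after \eqref{2.1}: a set $B\in\binom{\f_q}k$ lies in $S'$ precisely when $B$ is a union of $S$-orbits on $\f_q$. Hence $|S'|$ counts the subfamilies of the $S(a,b,H)$-orbits whose sizes add up to $k$, and the whole computation is driven by the orbit structure already determined in Section~\ref{s3.3}. Throughout I use the standard convention that a binomial coefficient $\binom nj$ is $0$ when $j<0$ or $j>n$.

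First I would dispose of (i). For $a=1$ we have $S(1,0,H)=\overline H$, whose orbits on $\f_q$ are exactly the $q/|H|$ additive cosets of $H$, each of size $|H|$. A union of such cosets has cardinality $k$ if and only if $|H|\mid k$, and then it is obtained by choosing $k/|H|$ of the $q/|H|$ cosets; this yields \eqref{3.18}.

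For (ii), assume $a\ne 1$. Section~\ref{s3.3} tells us $S(a,b,H)$ has a single short orbit, namely $-b/(a-1)+H$ of size $|H|$, together with $(q-|H|)/(o(a)|H|)$ long orbits, each of size $o(a)|H|$. Given a union $B$ of orbits, I would split on whether $B$ contains the short orbit. If it does not, then $|B|$ is a sum of sizes of long orbits, so $|B|=k$ forces $o(a)|H|\mid k$, and then there are $\binom{(q-|H|)/o(a)|H|}{k/o(a)|H|}$ admissible choices of long orbits. If it does, then $|B|=|H|+(\text{sum of long-orbit sizes})$, so $|B|=k$ forces $k-|H|\equiv 0\pmod{o(a)|H|}$, i.e.\ $k\equiv|H|\pmod{o(a)|H|}$, with $\binom{(q-|H|)/o(a)|H|}{(k-|H|)/o(a)|H|}$ admissible choices. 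Summing the two contributions gives \eqref{3.19}; here one uses that the two congruence conditions are incompatible, because $a\ne 1$ gives $o(a)\ge 2$ and hence $0<|H|<o(a)|H|$, so $k$ cannot be $\equiv 0$ and $\equiv |H|$ modulo $o(a)|H|$ simultaneously, and in all remaining residue classes neither case applies so $|S'|=0$. The argument has no real obstacle: the only points needing care are this disjointness of the cases and the bookkeeping of which residue of $k$ modulo $o(a)|H|$ produces which binomial coefficient — all the structural input is already contained in Section~\ref{s3.3}.
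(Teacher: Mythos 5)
Your proof is correct and is exactly the argument the paper intends: Corollary~\ref{C3.4} is stated without a separate proof precisely because it follows from the orbit sizes computed in Subsection~\ref{s3.3} together with the remark after \eqref{2.1} that $B\in S'$ iff $B$ is a union of $S$-orbits of total size $k$. Your case split on whether the short orbit is included, and the observation that the two congruence classes $0$ and $|H|$ modulo $o(a)|H|$ are distinct since $o(a)\ge 2$, is the complete bookkeeping needed.
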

 For integers $u,v>0$, define
\begin{equation}\label{3.20}
s_{q,k}(u,v)=\binom{(q-v)/uv}{k/uv}+\binom{(q-v)/uv}{(k-v)/uv},
\end{equation}
where a binomial coefficient $\binom xy$ is defined to be $0$ if $y\notin\Bbb N$. Then \eqref{3.18} and \eqref{3.19} can be combined as 
\begin{equation}\label{3.21}
|S(a,b,H)'|=s_{q,k}(o(a),|H|)\qquad\text{for all}\ (a,b,H)\in\mathcal S.
\end{equation}

\begin{rmk}\label{R3.5}
$s_{q,k}(u,v)=0$ unless $k\equiv 0$ or $v\pmod{uv}$. 
\end{rmk}


\section{A Solution to Question~\ref{Q1.2} for $\text{AGL}(1,\f_q)$}\label{s4}

First, we need to extend our notation. Let $q=p^\alpha$. For a vector space $V$ over a field $K$, let $\Bbb P_K(V)$ be the set of all $1$-dimensional subspaces of $V$. For each $H<\f_q$, let $H'=\{x\in\f_q:xH\subset H\}$, which is the largest subfield $K$ of $\f_q$ such that $H$ is a $K$-module. For each subset $X\subset\f_q$ and subfield $K\subset\f_q$, let $\langle X\rangle^K$ denote the $K$-span of $X$. For each positive integer $u$, let $\mathcal P(u)$ denote the set of all prime divisors of $u$; if $v\in\Bbb Z$ is such that $\text{gcd}(u,v)=1$, let $o_u(v)$ denote the multiplicative order of $v$ modulo $u$. If $P\subset\Bbb Z$ is finite, define $\Pi P=\prod_{e\in P}e$. Recall that $\gamma$ is a fixed generator of $\f_q^*$.

Let $\mu(\cdot,\cdot)$ denote the M\"obius function of the partially ordered set of all finite dimensional subspaces of a vector space over $\f_q$. It is well known \cite{Bender-Goldman-AMM-1975} that if $W\subset V$ are finite dimensional vector spaces over $\f_q$, then
\begin{equation}\label{4.1.0}
\mu_q(W,V)=(-1)^{\dim_{\f_q}(V/W)}q^{\binom{\dim_{\f_q}(V/W)}2}.
\end{equation}
Recall that the number of $m$-dimensional $\f_q$-subspaces of an $n$-dimensional $\f_q$-vector space is given by the $q$-binomial coefficient 
\begin{equation}\label{4.1.1}
{n\brack m}_q=\prod_{i=0}^{m-1}\frac{1-q^{n-i}}{1-q^{i+1}}.
\end{equation}

Given $S<\text{AGL}(1,\f_q)$ and $0\le k\le q$, let 
\begin{equation}\label{4.1}
\mathcal N(S,k)=\Bigl|\Bigl\{B\in\binom{\f_q}k:S=\text{AGL}(1,\f_q)_B\Bigr\}\Bigr|.
\end{equation}
Obviously, 
\begin{equation}\label{4.3.0}
\mathcal N(S,k)\le |S'|,
\end{equation}
but the equality does not hold in general.
The objective of this section is to determine $\mathcal N(S,k)$. By Proposition~\ref{P3.2} (i) and (iii), it suffices to consider $S=S(\gamma^{(q-1)/d},0,H)$, where $d\mid q-1$ and $(\gamma^{(q-1)/d},0,H)\in\mathcal S$. Since $\text{AGL}(1,\f_q)_B=\text{AGL}(1,\f_q)_{\f_q\setminus B}$, we also have
\begin{equation}\label{4.4.0}
\mathcal N(S,k)=\mathcal N(S,q-k).
\end{equation}
Hence it suffices to consider $0\le k\le q/2$.

\subsection{$S=S(1,0,H)$}\label{s4.1}\

Consider $S=S(1,0,H)$, where $(1,0,H)\in\mathcal S$. Let $\phi:\f_q\to\f_q/H$ be the canonical homomorphism and let $\mathcal B$ be a system of coset representatives of $H$ in $\f_q$. By Proposition~\ref{P3.3}, the immediate supergroups of $S(1,0,H)$ in $\text{AGL}(1,\f_q)$ are precisely 
\begin{equation}\label{4.2}
S(\gamma^{(q-1)/e},b,H),\qquad e\in\mathcal P(|H'|-1),\ b\in\mathcal B,
\end{equation}
and 
\begin{equation}\label{4.3}
S(1,0,\phi^{-1}(x)),\qquad x\in\Bbb P_{\f_p}(\f_q/H).
\end{equation}
Let $C\subset \mathcal P(|H'|-1)\times\mathcal B$ and $X\subset\Bbb P_{\f_p}(\f_q/H)$ be arbitrary. Define
\begin{equation}\label{4.4}
B(C)=\Bigl\{\frac b{\gamma^{(q-1)/e}-1}: (e,b)\in C\Bigr\},
\end{equation}
\begin{equation}\label{4.5}
\Delta(C)=\{u-v:u,v\in B(C)\}.
\end{equation}
Write
\begin{equation}\label{4.6}
C=\bigcup_{e\in P}(\{e\}\times B_e),
\end{equation}
where $P\subset\mathcal P(|H'|-1)$ and $\emptyset\ne B_e\subset\mathcal B$ for all $e\in P$.

\begin{lem}\label{L4.1}
In the above notation we have
\begin{align}\label{4.7}
&\Bigl\langle S(1,0,H)\cup\Bigl(\bigcup_{(e,b)\in C}S(\gamma^{(q-1)/e},b,H)\Bigr)\cup\Bigl(\bigcup_{x\in X}S(1,0,\phi^{-1}(x))\Bigr)\Bigr\rangle\\
=\,&S\Bigl(\gamma^{(q-1)/\Pi P},\, (\gamma^{(q-1)/\Pi P}-1)u,\, \Bigl\langle\Delta(C)\cup\phi^{-1}\Bigl(\Bigl\langle\bigcup_{x\in X}x\Bigr\rangle\Bigr)\Bigr\rangle^{\f_{p^{o_{\Pi P}(p)}}}\Bigr),\nonumber
\end{align}
where $u\in B(C)$ is arbitrary. (Note. When $C=\emptyset$, $\gamma^{(q-1)/\Pi P}-1=0$ and $u$ is irrelevant.)
\end{lem}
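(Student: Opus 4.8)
I will show directly that the subgroup $T$ on the left of \eqref{4.7} equals $S(a^*,0,H^*)$, where $a^*=\gamma^{(q-1)/\Pi P}$ and $H^*=\langle\Delta(C)\cup\phi^{-1}(\langle\bigcup_{x\in X}x\rangle)\rangle^{\f_{p^{o_{\Pi P}(p)}}}$, the general statement following by Proposition~\ref{P3.2}(iii) and a normalization below. Throughout write $a_e=\gamma^{(q-1)/e}$, write $\tau_c$ for the translation $\left[\begin{smallmatrix}1&c\\0&1\end{smallmatrix}\right]$ (so $\overline H=\{\tau_h:h\in H\}$), and, for $a\neq1$, write $\rho_{a,u}$ for $\left[\begin{smallmatrix}a&(a-1)u\\0&1\end{smallmatrix}\right]$, the element with linear part $a$ fixing $-u$; for $(e,b)\in C$ put $u_{e,b}=b/(a_e-1)$, so the generator $\left[\begin{smallmatrix}a_e&b\\0&1\end{smallmatrix}\right]$ of $T$ is $\rho_{a_e,u_{e,b}}$ and $B(C)=\{u_{e,b}:(e,b)\in C\}$. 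The facts that do the work are: (a) $\left[\begin{smallmatrix}a&b\\0&1\end{smallmatrix}\right]\tau_h\left[\begin{smallmatrix}a&b\\0&1\end{smallmatrix}\right]^{-1}=\tau_{ah}$ (this is \eqref{3.10.0}) and $\tau_c\rho_{a,u}\tau_c^{-1}=\rho_{a,u-c}$ (direct computation); (b) by \eqref{3.8}, for fixed $u$ the map $a\mapsto\rho_{a,u}$ is an isomorphism $\f_q^*\to\{\rho_{a,u}:a\in\f_q^*\}$, so $\rho_{a,u}^{\,j}=\rho_{a^j,u}$ and $\rho_{a,0}=\left[\begin{smallmatrix}a&0\\0&1\end{smallmatrix}\right]$; (c) $\rho_{a,u}\rho_{a,v}^{-1}=\tau_{(a-1)(u-v)}$ and $[\rho_{a,u},\rho_{a',v}]=\tau_{(a-1)(a'-1)(v-u)}$; (d) $e\in\mathcal P(|H'|-1)$ forces $a_e\in H'$, so the compositum $\f_p(\{a_e:e\in P\})$ equals $\f_{p^{o_{\Pi P}(p)}}$ and lies inside $H'$, whence $H$ and every $a_e-1$ lie in $\f_{p^{o_{\Pi P}(p)}}$ and every $a_e-1$ (being nonzero) is a unit there. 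Normalization: conjugating \eqref{4.7} by $\tau_u$ for a fixed $u\in B(C)$ fixes $S(1,0,H)=\overline H$ and each $S(1,0,\phi^{-1}(x))=\overline{\phi^{-1}(x)}$, sends $\rho_{a_e,u_{e,b}}$ to $\rho_{a_e,u_{e,b}-u}$ by (a), and sends $S(a^*,(a^*-1)u,H^*)$ to $S(a^*,0,H^*)$ by Proposition~\ref{P3.2}(iii); since the case $C=\emptyset$ is immediate ($T=\overline{\phi^{-1}(\langle\bigcup_{x\in X}x\rangle)}=S(1,0,\phi^{-1}(\langle\bigcup_{x\in X}x\rangle))$), we may assume $C\neq\emptyset$ (so $P\neq\emptyset$, $a^*\neq1$) and $0\in B(C)$, i.e. $\rho_{a_{e_0},0}$ is a generator of $T$ for some $e_0\in P$.

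For $T\subseteq S(a^*,0,H^*)$: first $(a^*,0,H^*)\in\mathcal S$, since $\f_{p^{o_{\Pi P}(p)}}=\f_p(a^*)$ makes $H^*$ an $\f_p(a^*)$-subspace by construction. Each generator of $T$ lies in $S(a^*,0,H^*)$: one has $H\subseteq\phi^{-1}(\langle\bigcup_{x\in X}x\rangle)\subseteq H^*$, disposing of $S(1,0,H)$ and each $S(1,0,\phi^{-1}(x))$; and for $(e,b)\in C$, from $e\mid\Pi P$ we get $a_e=(a^*)^{\Pi P/e}$, so by (b) $\rho_{a_e,0}=\rho_{a^*,0}^{\,\Pi P/e}\in S(a^*,0,H^*)$, and since $u_{e,b}=u_{e,b}-0\in\Delta(C)\subseteq H^*$ we may conjugate $\rho_{a_e,0}$ by $\tau_{-u_{e,b}}\in\overline{H^*}\subseteq S(a^*,0,H^*)$ to get, by (a), $\rho_{a_e,u_{e,b}}=\left[\begin{smallmatrix}a_e&b\\0&1\end{smallmatrix}\right]\in S(a^*,0,H^*)$.

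For $T\supseteq S(a^*,0,H^*)$: by \eqref{3.11}, $S(a^*,0,H^*)=\langle\rho_{a^*,0}\rangle\ltimes\overline{H^*}$, so it suffices to put $\overline{H^*}$ and $\rho_{a^*,0}$ into $T$. For $\overline{H^*}$: the generators $\overline H$ and $\overline{\phi^{-1}(x)}$ ($x\in X$) generate $\overline{\sum_{x\in X}\phi^{-1}(x)}=\overline{\phi^{-1}(\langle\bigcup_{x\in X}x\rangle)}$ (each $\phi^{-1}(x)\supseteq\ker\phi=H$); for each $v\in B(C)$, applying (c) to $\rho_{a_{e_0},0}$ and a generator $\rho_{a_e,v}$ with $u_{e,b}=v$ — the first identity when $e=e_0$, the commutator otherwise — puts $\tau_{\lambda v}$ into $T$ with $\lambda$ a unit of $\f_{p^{o_{\Pi P}(p)}}$ by (d), hence $\tau_v\in T$, so $\tau_{u-v}=\tau_u\tau_v^{-1}\in T$ for all $u,v\in B(C)$, i.e. $\overline{\Delta(C)}\subseteq T$; finally, conjugating any translation already in $T$ by the rotation generators and using (a) shows the set of translations in $T$ is closed under multiplication by each $a_e$ ($e\in P$), hence is an $\f_{p^{o_{\Pi P}(p)}}$-subspace by (d), so $\overline{H^*}\subseteq T$. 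Granting $\overline{H^*}\subseteq T$: for $e\in P$, conjugating the generator $\rho_{a_e,u_{e,b}}$ by $\tau_{u_{e,b}}\in\overline{H^*}$ gives $\rho_{a_e,0}\in T$, and by (b) these generate, inside the rotation subgroup fixing $0$, the element $\rho_{a^*,0}$ (the $a_e$, $e\in P$, have the distinct prime orders $e$, so $\langle a_e:e\in P\rangle=\langle\gamma^{(q-1)/\Pi P}\rangle$). Hence $\rho_{a^*,0}\in T$, completing both inclusions.

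The crux is the second inclusion, and within it the determination of the translation subgroup of $T$: one must see that precisely the translations by $H^*$ are forced in — the differences of the parameters in $B(C)$ together with $\phi^{-1}(\langle\bigcup_{x\in X}x\rangle)$, closed up under the field $\f_{p^{o_{\Pi P}(p)}}$ generated by the rotation parts — and that no more appear, the last point being exactly what the first inclusion supplies. The rest is routine but needs care: the bookkeeping of "centers" (passing between $\left[\begin{smallmatrix}a&b\\0&1\end{smallmatrix}\right]$ and its fixed point $-b/(a-1)$, encapsulated in the $\rho$-notation), the field arithmetic of (d), and the degenerate cases $C=\emptyset$, $X=\emptyset$, and $P=\{e_0\}$ with $|B_{e_0}|=1$.
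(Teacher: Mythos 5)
Your proof is correct and follows essentially the same route as the paper's: both inclusions are verified on generators, with the commutator/quotient identities producing the translations by $\Delta(C)$, the linear parts forcing the translation subgroup to be an $\f_{p^{o_{\Pi P}(p)}}$-module, and the generators of linear parts $\gamma^{(q-1)/e}$, $e\in P$, combining to give linear part $\gamma^{(q-1)/\Pi P}$. The only real difference is cosmetic: your preliminary conjugation by $\tau_u$ (so that $0\in B(C)$) replaces the paper's explicit B\'ezout-coefficient computation with \eqref{3.8} by a cleaner argument inside the stabilizer of $0$.
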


\begin{proof}
Let $L$ and $R$ denote the left and right side of \eqref{4.7}, respectively. Note that $\f_p(\gamma^{(q-1)/\Pi P})=\f_{p^{o_{\Pi P}(p)}}$; hence the triple of parameters in $R$ belongs to $\mathcal S$.

\medskip
$1^\circ$ We first show that $L\subset R$. Let 
\[
H_1=\Bigl\langle\Delta(C)\cup\phi^{-1}\Bigl(\Bigl\langle\bigcup_{x\in X}x\Bigr\rangle\Bigr)\Bigr\rangle^{\f_{p^{o_{\Pi P}(p)}}}.
\]
For each $x\in X$, clearly, $S(1,0,\phi^{-1}(x))\subset\overline H_1\subset R$. Moreover, for each $(e,b)\in C$, we have
\begin{align*}
R\,&\ni\left[\begin{matrix}\gamma^{(q-1)/\Pi P}&(\gamma^{(q-1)/\Pi P}-1)u\cr 0&1\end{matrix}\right]^{(\Pi P)/e}\cr
&=\left[\begin{matrix}\gamma^{(q-1)/e}&\frac{\gamma^{(q-1)/e}-1}{\gamma^{(q-1)/\Pi P}-1}(\gamma^{(q-1)/\Pi P}-1)u\cr 0&1\end{matrix}\right]\kern 5mm \text{(by \eqref{3.7})}\cr
&=\left[\begin{matrix}\gamma^{(q-1)/e}&(\gamma^{(q-1)/e}-1)u\cr 0&1\end{matrix}\right]\cr
&\in \left[\begin{matrix}\gamma^{(q-1)/e}&(\gamma^{(q-1)/e}-1)\frac b{\gamma^{(q-1)/e}-1}\cr 0&1\end{matrix}\right]\overline H_1\kern 8mm \text{(since $u-\frac b{\gamma^{(q-1)/e}-1}\in H_1$)}\cr
&=\left[\begin{matrix}\gamma^{(q-1)/e}&b\cr 0&1\end{matrix}\right]\overline H_1.
\end{align*}
Thus $L\subset R$.

\medskip
$2^\circ$ Now we show that $R\subset L$. Let $\overline H_1$ be the Sylow $p$-subgroup of $L$, where $H_1<\f_q$. Let $C=\{(e_1,b_1),\dots,(e_l,b_l)\}$. Since $\text{gcd}\{(q-1)/e_i:1\le i\le l\}=(q-1)/\Pi P$, we have
\[
\sum_{i=1}^l\alpha_i\frac{q-1}{e_i}=\frac{q-1}{\Pi P}
\]
for some $\alpha_1,\dots,\alpha_l\in\Bbb Z$. Then
\begin{align}\label{4.8}
L\,&\ni \left[\begin{matrix}\gamma^{(q-1)/e_1}&b_1\cr 0&1\end{matrix}\right]^{\alpha_1}\cdots \left[\begin{matrix}\gamma^{(q-1)/e_l}&b_l\cr 0&1\end{matrix}\right]^{\alpha_l}\\
&=\left[\begin{matrix}\gamma^{\sum_{i=1}^l\alpha_i(q-1)/e_i}&*\cr 0&1\end{matrix}\right]=\left[\begin{matrix}\gamma^{(q-1)/\Pi P}&*\cr 0&1\end{matrix}\right]. \nonumber
\end{align}
It follows that $H_1$ is an $\f_{p^{o_{\Pi P}(p)}}$-subspace of $\f_q$.

Clearly, $\phi^{-1}(\langle\,\bigcup_{x\in X}x\rangle)\subset H_1$. Let $a_i=\gamma^{(q-1)/e_i}$, $1\le i\le l$. Then for $1\le i,j\le l$, 
\[
L\ni\left[\begin{matrix}a_i&b_i\cr 0&1\end{matrix}\right]\left[\begin{matrix}a_j&b_j\cr 0&1\end{matrix}\right]\left[\begin{matrix}a_i&b_i\cr 0&1\end{matrix}\right]^{-1}\left[\begin{matrix}a_j&b_j\cr 0&1\end{matrix}\right]^{-1}=\left[\begin{matrix}1&(a_i-1)b_j-(a_j-1)b_i\cr 0&1\end{matrix}\right].
\]
Thus $(a_i-1)b_j-(a_j-1)b_i\in H_1$, i.e., $b_i/(a_i-1)-b_j/(a_j-1)\in H_1$. Hence $\Delta(C)\subset H_1$. Therefore
\begin{equation}\label{4.9}
\Bigl\langle\Delta(C)\cup\phi^{-1}\Bigl(\Bigl\langle\bigcup_{x\in X}x\Bigr\rangle\Bigr)\Bigr\rangle^{\f_{p^{o_{\Pi P}(p)}}}\subset H_1.
\end{equation}
Next, we take a closer look of \eqref{4.8}. In fact, we have
\begin{align*}
L\,&\ni\left[\begin{matrix}a_1&b_1\cr 0&1\end{matrix}\right]^{\alpha_1}\cdots\left[\begin{matrix}a_l&b_l\cr 0&1\end{matrix}\right]^{\alpha_l}\cr
&=\left[\begin{matrix}a_1^{\alpha_1}&(a_1^{\alpha_1}-1)b_1/(a_1-1)\cr 0&1\end{matrix}\right]\cdots\left[\begin{matrix}a_l^{\alpha_l}&(a_l^{\alpha_l}-1)b_l/(a_l-1)\cr 0&1\end{matrix}\right]\cr
&\in\left[\begin{matrix}a_1^{\alpha_1}&(a_1^{\alpha_1}-1)u\cr 0&1\end{matrix}\right]\cdots\left[\begin{matrix}a_l^{\alpha_l}&(a_l^{\alpha_l}-1)u\cr 0&1\end{matrix}\right]\overline H_1\cr
&=\left[\begin{matrix}a_1^{\alpha_1}\cdots a_l^{\alpha_l}&(a_1^{\alpha_1}\cdots a_l^{\alpha_l}-1)u_1\cr 0&1\end{matrix}\right]\overline H_1\kern 2cm \text{(by \eqref{3.8})}\cr
&=\left[\begin{matrix}\gamma^{(q-1)/\Pi P}&(\gamma^{(q-1)/\Pi P}-1)u\cr 0&1\end{matrix}\right]\overline H_1.
\end{align*}
Hence 
\begin{equation}\label{4.10}
\left[
\begin{matrix}\gamma^{(q-1)/\Pi P}&(\gamma^{(q-1)/\Pi P}-1)u\cr 0&1\end{matrix}\right]\in L.
\end{equation}
By \eqref{4.9} and \eqref{4.10}, $R\subset L$.
\end{proof}

\begin{thm}\label{T4.2}
Let $(1,0,H)\in\mathcal S$ and let $|H|=p^\beta$. Then for $0\le k\le q$,
\begin{align}\label{4.10.0}
&\mathcal N(S(1,0,H),k)=\\ 
&(1-p^{\alpha-\beta})\sum_{0\le l\le \alpha-\beta}(-1)^lp^{\binom l2}{{\alpha-\beta}\brack l}_pS_{q,k}(1,p^{\beta+l})\cr
&+p^{\alpha-\beta}\sum_{P\subset\mathcal P(|H'|-1)}\,\sum_{0\le l\le(\alpha-\beta)/o_{\Pi P}(p)}(-1)^{|P|+l}p^{o_{\Pi P}(p)\binom l2}{{(\alpha-\beta)/o_{\Pi P}(p)}\brack l}_{p^{o_{\Pi P}(p)}}\cr
&\cdot s_{q,k}(\Pi P,\, p^{\beta+lo_{\Pi P}(p)}).\nonumber
\end{align}
\end{thm}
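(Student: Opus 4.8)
The plan is to apply the inclusion–exclusion formula \eqref{2.2} with $G=\text{AGL}(1,\f_q)$, $S=S(1,0,H)$, and the set of immediate supergroups of $S$ as described in Proposition~\ref{P3.3}(i), which by the analysis preceding Lemma~\ref{L4.1} is the disjoint union of the two families \eqref{4.2} and \eqref{4.3}. Thus a generic subset of immediate supergroups is specified by a pair $(C,X)$ with $C\subset\mathcal P(|H'|-1)\times\mathcal B$ and $X\subset\Bbb P_{\f_p}(\f_q/H)$, and Lemma~\ref{L4.1} computes the join $\langle S\cup\cdots\rangle$ of $S$ together with this chosen set of supergroups explicitly as a single $S(a,b,H_1)$. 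Feeding \eqref{3.21} into \eqref{2.2}, we get
\[
\mathcal N(S(1,0,H),k)=\sum_{C,X}(-1)^{|C|+|X|}\,s_{q,k}\bigl(\Pi P,\,|H_1(C,X)|\bigr),
\]
where $H_1(C,X)$ is the $\f_{p^{o_{\Pi P}(p)}}$-space appearing in \eqref{4.7}. The task is then to organize this sum by the value of $\Pi P$ and by $\dim H_1$, and to count, for each target pair, how many $(C,X)$ produce it — weighted by the sign $(-1)^{|C|+|X|}$.

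First I would split the outer sum according to whether $P=\emptyset$ (equivalently $C=\emptyset$) or $P\ne\emptyset$. When $C=\emptyset$ we have $\Pi P=1$, $o_1(p)=1$, and $H_1=\langle\phi^{-1}(\langle\bigcup_{x\in X}x\rangle)\rangle^{\f_p}=\phi^{-1}(\langle X\rangle)$, a subspace of $\f_q$ containing $H$ whose dimension over $\f_p$ is $\beta+j$ where $j=\dim_{\f_p}\langle X\rangle$ ranges over $0\le j\le\alpha-\beta$. The alternating count $\sum_{X:\dim\langle X\rangle=j}(-1)^{|X|}$ over all sets $X$ of points in the $(\alpha-\beta)$-dimensional $\f_p$-space $\f_q/H$ whose span is a fixed $j$-dimensional subspace is a standard Möbius-type computation: by \eqref{4.1.0}, summing over subsets of points of a given subspace and Möbius-inverting, one gets $\mu_p(\{0\},W)=(-1)^j p^{\binom j2}$ for each $j$-dimensional $W$, and there are ${\alpha-\beta\brack j}_p$ such $W$. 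This accounts for the first sum in \eqref{4.10.0}; the prefactor $(1-p^{\alpha-\beta})$ arises because when $C\ne\emptyset$ the $C=\emptyset$ contribution must be corrected — more precisely, one reorganizes so that the $X$-sum is performed first for \emph{every} $C$, and the $C=\emptyset$ term is isolated while the $C\ne\emptyset$ terms collectively contribute a factor involving the number $p^{\alpha-\beta}=|\mathcal B|$ of choices of the coset $u$ that fixes $H_1$'s "base point". I expect the clean bookkeeping here to require care: one shows that for fixed $P\ne\emptyset$ and fixed target space $H_1$, the sum over all admissible $C$ with $\mathcal P(\ldots)\supset P$ of the right shape, and over all $X$, of $(-1)^{|C|+|X|}$ factors as (sign from $P$) $\times$ (count of $B_e$-configurations with prescribed $\Delta(C)$) $\times$ (the Möbius sum over $X$), and that the $C$-side count, after using \eqref{3.8}-type translations to normalize the base point $u$, contributes the $p^{\alpha-\beta}$ and a $q$-Möbius factor ${(\alpha-\beta)/o_{\Pi P}(p)\brack l}_{p^{o_{\Pi P}(p)}}$ over $\f_{p^{o_{\Pi P}(p)}}$-subspaces, exactly as in \eqref{4.1.0} applied with $q$ replaced by $p^{o_{\Pi P}(p)}$.

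Concretely, the key steps in order: (1) substitute \eqref{3.21} into \eqref{2.2}, indexing subsets of supergroups by $(C,X)$; (2) for $C=\emptyset$, perform the alternating sum over $X$ using the Möbius function \eqref{4.1.0} of the subspace lattice of $\f_q/H$ over $\f_p$, obtaining $\sum_l(-1)^l p^{\binom l2}{\alpha-\beta\brack l}_p\,s_{q,k}(1,p^{\beta+l})$; (3) for each nonempty $P\subset\mathcal P(|H'|-1)$ and each $C$ with $\{e:B_e\ne\emptyset\}=P$, use Lemma~\ref{L4.1} together with the translation identity \eqref{3.8} to see that the base point $u$ may be absorbed and that $\dim_{\f_{p^{o_{\Pi P}(p)}}H_1$ depends only on the $\f_{p^{o_{\Pi P}(p)}}$-span of $\Delta(C)\cup\phi^{-1}(\langle X\rangle)$; (4) carry out the inner alternating sums over the $B_e$'s and over $X$ via Möbius inversion over the $\f_{p^{o_{\Pi P}(p)}}$-subspace lattice of $\f_q/H$ (now of dimension $(\alpha-\beta)/o_{\Pi P}(p)$), producing the factor $(-1)^l p^{o_{\Pi P}(p)\binom l2}{(\alpha-\beta)/o_{\Pi P}(p)\brack l}_{p^{o_{\Pi P}(p)}}$ and the sign $(-1)^{|P|}$; (5) identify the leftover multiplicative constant as $p^{\alpha-\beta}$ by counting the free choices of the coset representative $u$ once the space $H_1$ is fixed, and check the boundary reconciliation between the $C=\emptyset$ and $C\ne\emptyset$ regimes that yields the prefactor $(1-p^{\alpha-\beta})$ on the first sum; (6) collect terms to obtain \eqref{4.10.0}.

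The main obstacle I anticipate is step (5) — the precise accounting of the constant $p^{\alpha-\beta}$ and the reconciliation producing $(1-p^{\alpha-\beta})$. The subtlety is that the base-point data $b$ in the supergroups \eqref{4.2} (equivalently $u\in B(C)$) is not "free" in the same way the subspace data is: once one fixes the resulting $H_1$ and the residue of $u$ modulo $H_1$, many different $C$'s with the same $P$ give the same join, and disentangling which $(-1)^{|C|}$ survive requires a second, nested inclusion–exclusion on the $B_e$'s whose $\Delta$-set lies in a prescribed coset structure. Handling the degenerate overlap — that a nonempty-$C$ configuration can produce an $H_1$ containing a base point, so its contribution interacts with the $C=\emptyset$ (pure-$X$) stratum, generating the $(1-p^{\alpha-\beta})$ correction rather than two cleanly separate sums — is where the argument is least routine and will need the most careful combinatorial bookkeeping, very much in the spirit of (but more intricate than) the PGL/PSL computations of \cite{Cameron-Omidi-Tayfeh-Rezaie-E-JC-2006, Cameron-Maimani-Omidi-Tayfeh-Rezaie-DM-2006}.
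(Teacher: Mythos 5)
Your overall architecture is the paper's: substitute \eqref{3.21} and Lemma~\ref{L4.1} into the inclusion--exclusion formula \eqref{2.2}, index subsets of immediate supergroups by pairs $(C,X)$, and evaluate the alternating sum by M\"obius inversion on subspace lattices; your treatment of the $C=\emptyset$ stratum is exactly right. But your plan for the $C\ne\emptyset$ stratum --- the part you yourself flag as the obstacle --- is missing the one idea that makes it collapse, and your proposed substitute (a ``second, nested inclusion--exclusion on the $B_e$'s'') would not go through cleanly, because $\Delta(C)$ is a difference set of the points $b/(\gamma^{(q-1)/e}-1)$ and its span does not decompose over the individual $B_e$'s. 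The paper's key step is \eqref{4.13}--\eqref{4.16}: for \emph{fixed} $C$, group the $X$'s by the module $V$ they generate together with $\Delta(C)$; the cumulative sums $f_\le(V)=\sum_{X\subset\Bbb P_{\f_p}(V/H)}(-1)^{|X|}$ vanish unless $V=H$, so M\"obius inversion gives $f(V)=\mu_{p^{o_{\Pi P}(p)}}(H,V)$ when $\Delta(C)\subset H$ and $f(V)=0$ otherwise. In other words, the alternating $X$-sum \emph{annihilates every $C$ with $\Delta(C)\not\subset H$}, and no inclusion--exclusion on the $B_e$'s is needed. What survives is a clean count: a nonempty $C$ with $\Delta(C)\subset H$ is exactly a choice of $\emptyset\ne P\subset\mathcal P(|H'|-1)$ together with a coset $u+H$, which then forces $b_e$ for every $e\in P$ (so each $B_e$ is a singleton). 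Hence for each nonempty $P$ there are precisely $q/|H|=p^{\alpha-\beta}$ surviving $C$'s, each with sign $(-1)^{|P|}$. This is the sole source of the factor $p^{\alpha-\beta}$; the factor $(-1)^lp^{o_{\Pi P}(p)\binom l2}$ is the M\"obius value $\mu_{p^{o_{\Pi P}(p)}}(H,U)$ delivered by the $X$-sum, and the $q$-binomial coefficient counts the target modules $U$ with $\dim_{\f_{p^{o_{\Pi P}(p)}}}(U/H)=l$ --- it does not come from the $C$-side as your step (4) suggests.

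Relatedly, your reading of the prefactor $(1-p^{\alpha-\beta})$ as a ``boundary reconciliation'' or cross-stratum ``interaction'' is not what happens: the two strata contribute independently, yielding \eqref{4.18}, i.e.\ $(\text{first sum})+p^{\alpha-\beta}\sum_{\emptyset\ne P}(\cdots)$. The statement \eqref{4.10.0} merely lets the second sum run over \emph{all} $P$, including $P=\emptyset$, whose term equals $p^{\alpha-\beta}$ times the first sum (since $\Pi\emptyset=1$ and $o_1(p)=1$), and compensates by multiplying the first sum by $1-p^{\alpha-\beta}$. So the prefactor is pure bookkeeping, not a combinatorial correction. With the vanishing argument \eqref{4.15}--\eqref{4.16} supplied, the rest of your outline goes through and reproduces the paper's proof.
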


\begin{proof}
By \eqref{4.1}, \eqref{2.2}, \eqref{4.7}, and \eqref{3.21}, we have
\begin{align}\label{4.11}
&\mathcal N(S(1,0,H),k)=\\
&\sum_{\substack{C\subset\mathcal P(|H'|-1)\times\mathcal B\cr X\subset\Bbb P_{\f_p}(\f_q/H)}}(-1)^{|C|+|X|}s_{q,k}\Bigl(\Pi P,\, \Bigl|\Bigl\langle\Delta(C)\cup\phi^{-1}\Bigl(\Bigl\langle\bigcup_{x\in X}x\Bigr\rangle\Bigr)\Bigr\rangle^{\f_{p^{o_{\Pi P}(p)}}}\Bigr|\,\Bigr). \nonumber
\end{align}
(Note. In \eqref{4.11}, $P\subset\mathcal P(|H'|-1)$ is given by \eqref{4.6} and is dependent on $C$.) In what follows, we simplify \eqref{4.11} to a more computable form. Rewrite \eqref{4.11} as
\begin{align}\label{4.12}
&\mathcal N(S(1,0,H),k)=\\
&\sum_{\substack{C\subset\mathcal P(|H'|-1)\times\mathcal B\cr U:\, \f_{p^{o_{\Pi P}(p)}}-\text{module}\cr \Delta(C)\cup H\subset U\subset\f_q}}(-1)^{|C|}s_{q,k}(\Pi P,|U|)\sum_{\substack{X\subset\Bbb P_{\f_p}(\f_q/H)\cr \langle\Delta(C)\cup\phi^{-1}(\langle\bigcup_{x\in X}x\rangle)\rangle^{\f_{p^{o_{\Pi P}(p)}}}=U}}(-1)^{|X|}. \nonumber
\end{align}
Let $C$ and $U$ in the outer sum of \eqref{4.12} be fixed. For each $\f_{p^{o_{\Pi P}(p)}}$-module $V$ with $\Delta(C)\cup H\subset V\subset U$, define
\begin{equation}\label{4.13}
f(V)=\sum_{\substack{X\subset\Bbb P_{\f_p}(\f_q/H)\cr \langle\Delta(C)\cup\phi^{-1}(\langle\bigcup_{x\in X}x\rangle)\rangle^{\f_{p^{o_{\Pi P}(p)}}}=V}}(-1)^{|X|}
\end{equation}
and 
\begin{equation}\label{4.14}
f_\le(V)=\sum_{\substack{W:\, \f_{p^{o_{\Pi P}(p)}}\text{-module}\cr \Delta(C)\cup H\subset W\subset V}}f(W).
\end{equation}
Then
\begin{equation}\label{4.15}
f_\le (V)=\sum_{X\subset \Bbb P_{\f_p}(V/H)}(-1)^{|X|}=
\begin{cases}
1&\text{if}\ V=H,\cr
0&\text{if}\ V\supsetneq H.
\end{cases}
\end{equation}
By the M\"obius version,
\begin{equation}\label{4.16}
f(V)=\sum_{\substack{W\cr \Delta(C)\cup H\subset W\subset V}}\mu_{p^{o_{\Pi P}(p)}}(W,V)f_\le (W)=
\begin{cases}
\mu_{p^{o_{\Pi P}(p)}}(H,V)&\text{if}\ \Delta(C)\subset H,\cr
0&\text{if}\ \Delta(C)\not\subset H.
\end{cases}
\end{equation}
Therefore \eqref{4.12} becomes
\begin{equation}\label{4.17}
\mathcal N(S(1,0,H),k)=\sum_{\substack{C\subset\mathcal P(|H'|-1)\times\mathcal B,\,\Delta(C)\subset H\cr U:\, \f_{p^{o_{\Pi P}(p)}}\text{-module},\, H\subset U\subset\f_q}}(-1)^{|C|}s_{q,k}(\Pi P,|U|)\,\mu_{p^{o_{\Pi P}(p)}}(H,U).
\end{equation}
Each $\emptyset\ne C\subset\mathcal P(|H'|-1)\times B$ with $\Delta(C)\subset H$ is obtained in the following way: First choose $\emptyset\ne P\subset\mathcal P(|H'|-1)$ and a coset $u+H$, where $u\in\mathcal B$. Then choose $C=\{(e,b_e):e\in P\}$, where $b_e\in\mathcal B$ is the unique element such that $b_e/(\gamma^{(q-1)/e}-1)\in u+H$. Therefore, \eqref{4.17} can be written as
\begin{align}\label{4.18}
&\mathcal N(S(1,0,H),k)\\
=\,&\sum_{U:\, \f_p\text{-module},\, H\subset U\subset\f_q}s_{q,k}(1,|U|)\,\mu_p(H,U)\cr
&+\sum_{\substack{\emptyset\ne P\subset\mathcal P(|H'|-1)\cr U:\, \f_{p^{o_{\Pi P}(p)}}\text{-module},\, H\subset U\subset\f_q}}\frac q{|H|}(-1)^{|P|}s_{q,k}(\Pi P,|U|)\,\mu_{p^{o_{\Pi P}(p)}}(H,U). \nonumber
\end{align}
Let $l=\dim_{\f_{p^{o_{\Pi P}(p)}}}(U/H)$. Then
\[
\mu_{p^{o_{\Pi P}(p)}}(H,U)=(-1)^lp^{o_{\Pi P}(p)\binom l2},
\]
and \eqref{4.18} becomes
\begin{align*}
&\mathcal N(S(1,0,H),k)=\cr
&\sum_{0\le l\le \alpha-\beta}(-1)^lp^{\binom l2}{{\alpha-\beta}\brack l}_ps_{q,k}(1,p^{\beta+l})\cr
&+p^{\alpha-\beta}\sum_{\emptyset\ne P\subset\mathcal P(|H'|-1)}\,\sum_{0\le l\le(\alpha-\beta)/o_{\Pi P}(p)}(-1)^{|P|+l}p^{o_{\Pi P}(p)\binom l2}{{(\alpha-\beta)/o_{\Pi P}(p)}\brack l}_{p^{o_{\Pi P}(p)}}\cr
&\cdot s_{q,k}(\Pi P,\, p^{\beta+lo_{\Pi P}(p)}). 
\end{align*}
\end{proof} 

\subsection{$S=S(\gamma^{(q-1)/d},0,H)$, $1<d\mid q-1$}\

Let $(\gamma^{(q-1)/d},0,H)\in\mathcal S$, where $1<d\mid q-1$. The computation of $\mathcal N(S(\gamma^{(q-1)/d},0,$ $H),k)$ is similar to Subsection~\ref{s4.1}. By Proposition~\ref{P3.3} (ii), the immediate supergroups of $S(\gamma^{(q-1)/d},0,H)$ in $\text{AGL}(1,\f_q)$ are precisely
\begin{equation}\label{4.19}
S(\gamma^{(q-1)/de},0,H),\qquad e\in\mathcal P\Bigl(\frac{|H'|-1}d\Bigr),
\end{equation}
and 
\begin{equation}\label{4.20}
S(\gamma^{(q-1)/d},0,\phi^{-1}(x)),\qquad x\in\Bbb P_{\f_{p^{o_d(p)}}}(\f_q/H).
\end{equation}

\begin{lem}\label{L4.2.0}
Let $1<d\mid q-1$, $P\subset\mathcal P((|H'|-1)/d)$ and $X\subset \Bbb P_{\f_{p^{o_d(p)}}}(\f_q/H)$. Then 
\begin{align}\label{4.21}
&\Bigl\langle S(\gamma^{(q-1)/d},0,H)\cup\Bigl(\bigcup_{e\in P}S(\gamma^{(q-1)/de},0,H)\Bigr)\cup\Bigl(\bigcup_{x\in X}S(\gamma^{(q-1)/d},0,\phi^{-1}(x))\Bigr)\Bigr\rangle\\
=\,&S\Bigl(\gamma^{(q-1)/d\Pi P},\, 0,\, \Bigl\langle\phi^{-1}\Bigl(\Bigl\langle\bigcup_{x\in X}x\Bigr\rangle\Bigr)\Bigr\rangle^{\f_{p^{o_{d\Pi P}(p)}}}\Bigr). \nonumber
\end{align}
\end{lem}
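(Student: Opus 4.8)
The plan is to prove Lemma~\ref{L4.2.0} by the same double-inclusion strategy used in Lemma~\ref{L4.1}, but simplified by the fact that here $a=\gamma^{(q-1)/d}\ne 1$, so every immediate supergroup has trivial "$b$-part" and Proposition~\ref{P3.3}(ii) forces all larger subgroups to have the form $S(\gamma^{(q-1)/d'},0,H_1)$. Let $L$ and $R$ denote the left and right sides of \eqref{4.21}. First I would observe that $\f_p(\gamma^{(q-1)/d\Pi P})=\f_{p^{o_{d\Pi P}(p)}}$, so the triple appearing in $R$ is indeed in $\mathcal S$ (note $H\subset\f_{p^{o_d(p)}}\subset\f_{p^{o_{d\Pi P}(p)}}$ since $d\mid d\Pi P$, and $\phi^{-1}(\langle\bigcup_{x\in X}x\rangle)$ is an $\f_{p^{o_d(p)}}$-module containing $H$).

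For the inclusion $L\subset R$: each generator $\left[\begin{smallmatrix}\gamma^{(q-1)/de}&0\cr 0&1\end{smallmatrix}\right]$ with $e\in P$ lies in $\langle\left[\begin{smallmatrix}\gamma^{(q-1)/d\Pi P}&0\cr 0&1\end{smallmatrix}\right]\rangle$ because $(q-1)/d\Pi P$ divides $(q-1)/de$, hence these elements are in $R$; and for each $x\in X$, $S(\gamma^{(q-1)/d},0,\phi^{-1}(x))\subset R$ since $\gamma^{(q-1)/d}\in\langle\gamma^{(q-1)/d\Pi P}\rangle$ and $\phi^{-1}(x)\subset\langle\phi^{-1}(\langle\bigcup_{x\in X}x\rangle)\rangle^{\f_{p^{o_{d\Pi P}(p)}}}$. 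So $L\subset R$. For the reverse inclusion $R\subset L$: by Proposition~\ref{P3.3}(ii) (applied to $S(\gamma^{(q-1)/d},0,H)\subset L$) we may write $L=S(\gamma^{(q-1)/d'},0,H_1)$ for some $d'$ with $d\mid d'\mid q-1$ and $H\subset H_1$; one then shows $\gamma^{(q-1)/d'}\in\langle\gamma^{(q-1)/d\Pi P}\rangle$, i.e. $d\Pi P\mid d'$, by using $\text{gcd}\{(q-1)/de:e\in P\}=(q-1)/d\Pi P$ to write $(q-1)/d\Pi P$ as a $\Bbb Z$-combination of the exponents $(q-1)/de$, exactly as in \eqref{4.8}; combined with the reverse divisibility (which holds since each $\gamma^{(q-1)/de}\in L$ forces $\gamma^{(q-1)/d\Pi P}\in L$, hence $d'\mid d\Pi P$) this gives $d'=d\Pi P$. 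Finally, the Sylow $p$-subgroup $\overline H_1$ of $L$ is an $\f_{p^{o_{d\Pi P}(p)}}$-module (since $\gamma^{(q-1)/d\Pi P}$ normalizes it and conjugation acts by multiplication, cf.~\eqref{3.10.0}), it contains $\phi^{-1}(x)$ for every $x\in X$, hence contains $\langle\phi^{-1}(\langle\bigcup_{x\in X}x\rangle)\rangle^{\f_{p^{o_{d\Pi P}(p)}}}$; conversely $H_1$ is generated over $\f_{p^{o_{d\Pi P}(p)}}$ by cosets that $\phi$ sends into $\langle\bigcup_{x\in X}x\rangle$ (the only $p$-elements adjoined are the $\overline{\phi^{-1}(x)}$'s, and no new $b$-translations arise because all adjoined semisimple generators have $b=0$), so $H_1=\langle\phi^{-1}(\langle\bigcup_{x\in X}x\rangle)\rangle^{\f_{p^{o_{d\Pi P}(p)}}}$ and $R=L$.

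I expect the main obstacle to be the careful bookkeeping in the reverse inclusion showing that $H_1$ is exactly the prescribed $\f_{p^{o_{d\Pi P}(p)}}$-span and nothing larger: one must confirm that generating the group does not accidentally enlarge the $p$-part beyond the span of the $\phi^{-1}(x)$ together with $H$. This is genuinely easier than the $a=1$ case of Lemma~\ref{L4.1} — there is no $\Delta(C)$ term here because the commutator $\left[\begin{smallmatrix}a_i&0\cr 0&1\end{smallmatrix}\right]\left[\begin{smallmatrix}a_j&0\cr 0&1\end{smallmatrix}\right]\left[\begin{smallmatrix}a_i&0\cr 0&1\end{smallmatrix}\right]^{-1}\left[\begin{smallmatrix}a_j&0\cr 0&1\end{smallmatrix}\right]^{-1}$ is trivial and products of the semisimple generators stay in the torus $A$ by \eqref{3.8} — so the $p$-part of $L$ is generated as an $\f_{p^{o_{d\Pi P}(p)}}$-module purely by $H$ and the $\phi^{-1}(x)$, $x\in X$, which is what \eqref{4.21} asserts. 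The one subtlety worth spelling out is why the module structure jumps from $\f_{p^{o_d(p)}}$ up to $\f_{p^{o_{d\Pi P}(p)}}$: it is forced the moment $\gamma^{(q-1)/d\Pi P}\in L$, via $aH_1=H_1$ from \eqref{3.10.0}.
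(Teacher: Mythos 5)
Your proof is correct and is precisely what the paper intends: the paper omits this proof entirely, saying only that it is ``similar to but simpler than that of Lemma~\ref{L4.1}, '' and your double-inclusion argument --- the gcd step yielding $\left[\begin{smallmatrix}\gamma^{(q-1)/d\Pi P}&0\cr 0&1\end{smallmatrix}\right]\in L$, the $\f_{p^{o_{d\Pi P}(p)}}$-module structure of the Sylow $p$-part via \eqref{3.10.0}, and the observation that all $b$-parts vanish so no $\Delta(C)$ term arises --- is exactly that simplification. Two harmless slips worth fixing: $H$ is an $\f_{p^{o_d(p)}}$-subspace of $\f_q$, not a subset of $\f_{p^{o_d(p)}}$; and you have the two divisibilities attached to the wrong sub-arguments ($\gamma^{(q-1)/d\Pi P}\in L$ gives $d\Pi P\mid d'$, while $\gamma^{(q-1)/d'}\in\langle\gamma^{(q-1)/d\Pi P}\rangle$, i.e.\ $L\subset R$, gives $d'\mid d\Pi P$), though both divisibilities are in fact established by what you wrote and the conclusion $d'=d\Pi P$ stands.
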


\begin{proof}
The proof of Lemma~\ref{L4.2.0} is similar to but simpler than that of Lemma~\ref{L4.1}. We leave the details for the reader.
\end{proof}

\begin{thm}\label{T4.3}
Let $(\gamma^{(q-1)/d},0,H)\in\mathcal S$, where $1<d\mid q-1$, and let $|H|=p^\beta$. Then for $0\le k\le q$, 
\begin{align}\label{4.21.0}
&\mathcal N(S(\gamma^{(q-1)/d},0,H),k)=\\
&\sum_{P\subset\mathcal P((|H'|-1)/d)}\,\sum_{0\le l\le(\alpha-\beta)/o_{d\Pi P}(p)}(-1)^{|P|+l}p^{o_{d\Pi P}(p)\binom l2}{{(\alpha-\beta)/o_{d\Pi P}(p)}\brack l}_{p^{o_{d\Pi P}(p)}}\cr 
\noalign{\vspace{1mm}}
&\cdot s_{q,k}(d\,\Pi P,\, p^{\beta+lo_{d\Pi P}(p)}). \nonumber
\end{align}
\end{thm}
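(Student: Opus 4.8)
The plan is to mirror the proof of Theorem~\ref{T4.2}, but in the simpler setting where $a=\gamma^{(q-1)/d}\ne 1$, so that by Proposition~\ref{P3.3}(ii) all supergroups of $S(\gamma^{(q-1)/d},0,H)$ are again of the ``pure'' form $S(\gamma^{(q-1)/de_1},0,H_1)$ with no translation parameter to keep track of. First I would start from the inclusion--exclusion formula \eqref{2.2} applied to $S=S(\gamma^{(q-1)/d},0,H)$ with its immediate supergroups listed in \eqref{4.19} and \eqref{4.20}. Using Lemma~\ref{L4.2.0} to evaluate the join of any family of these supergroups, together with \eqref{3.21} for $|S'|$, one gets the analogue of \eqref{4.11}:
\begin{equation*}
\mathcal N(S(\gamma^{(q-1)/d},0,H),k)=\sum_{\substack{P\subset\mathcal P((|H'|-1)/d)\cr X\subset\Bbb P_{\f_{p^{o_d(p)}}}(\f_q/H)}}(-1)^{|P|+|X|}\,s_{q,k}\Bigl(d\,\Pi P,\,\Bigl|\Bigl\langle\phi^{-1}\Bigl(\Bigl\langle{\textstyle\bigcup_{x\in X}}x\Bigr\rangle\Bigr)\Bigr\rangle^{\f_{p^{o_{d\Pi P}(p)}}}\Bigr|\Bigr).
\end{equation*}
Here the bookkeeping is already cleaner than in Theorem~\ref{T4.2}: there is no $B(C)$, no $\Delta(C)$, and the first summand of \eqref{4.10.0} (the ``$P=\emptyset$, $\f_p$-module'' part) disappears because even when $P=\emptyset$ the relevant module is an $\f_{p^{o_d(p)}}$-module, not merely an $\f_p$-module.

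Next I would carry out the same Möbius-inversion step as in \eqref{4.12}--\eqref{4.16}. For fixed $P$, regroup the sum over $X$ according to the $\f_{p^{o_{d\Pi P}(p)}}$-module $U=\langle\phi^{-1}(\langle\bigcup_{x\in X}x\rangle)\rangle^{\f_{p^{o_{d\Pi P}(p)}}}$ that it generates, define $f(U)$ and $f_{\le}(U)$ exactly as in \eqref{4.13}--\eqref{4.14}, observe that $f_{\le}(U)=\sum_{X\subset\Bbb P_{\f_p}(U/H)}(-1)^{|X|}$ equals $1$ if $U=H$ and $0$ otherwise, and conclude from the Möbius inversion formula that $f(U)=\mu_{p^{o_{d\Pi P}(p)}}(H,U)$. (Since there is no $\Delta(C)$-type constraint, the ``$\Delta(C)\subset H$'' case distinction of \eqref{4.16} simply does not arise.) This yields
\begin{equation*}
\mathcal N(S(\gamma^{(q-1)/d},0,H),k)=\sum_{\substack{P\subset\mathcal P((|H'|-1)/d)\cr U:\,\f_{p^{o_{d\Pi P}(p)}}\text{-module},\,H\subset U\subset\f_q}}(-1)^{|P|}\,s_{q,k}(d\,\Pi P,|U|)\,\mu_{p^{o_{d\Pi P}(p)}}(H,U).
\end{equation*}

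Finally, I would convert the inner sum over $U$ into a sum over $l=\dim_{\f_{p^{o_{d\Pi P}(p)}}}(U/H)$. There are $\binom{(\alpha-\beta)/o_{d\Pi P}(p)}{l}_{p^{o_{d\Pi P}(p)}}$ such $U$, each contributing $\mu_{p^{o_{d\Pi P}(p)}}(H,U)=(-1)^l p^{o_{d\Pi P}(p)\binom l2}$ by \eqref{4.1.0}, and each such $U$ has $|U|=p^{\beta+l\,o_{d\Pi P}(p)}$; substituting gives precisely \eqref{4.21.0}. A small point to verify along the way is that $o_d(p)\mid o_{d\Pi P}(p)$ and $(\alpha-\beta)/o_{d\Pi P}(p)$ is a nonnegative integer whenever the corresponding $q$-binomial coefficient is nonzero, so that the index ranges in \eqref{4.21.0} are meaningful; this follows because $H$ being an $\f_{p^{o_d(p)}}$-subspace and $\f_p(\gamma^{(q-1)/d\Pi P})=\f_{p^{o_{d\Pi P}(p)}}$ force $H$ to be an $\f_{p^{o_{d\Pi P}(p)}}$-subspace exactly when $o_{d\Pi P}(p)\mid\beta$ and $o_{d\Pi P}(p)\mid\alpha$. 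The only genuine obstacle is getting the filtered-index arithmetic consistent — in particular confirming that the generated subgroup in Lemma~\ref{L4.2.0} indeed has trivial translation part so that no $S(\gamma^{\cdot},b,\cdot)$ with $b\not\equiv 0$ ever enters — but that is precisely what Lemma~\ref{L4.2.0} provides, so the remainder is a routine specialization of the argument in Theorem~\ref{T4.2}.
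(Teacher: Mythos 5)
Your proposal is correct and follows essentially the same route as the paper: inclusion–exclusion via \eqref{2.2} with the supergroups \eqref{4.19}–\eqref{4.20}, Lemma~\ref{L4.2.0} for the joins, Möbius inversion over the lattice of $\f_{p^{o_{d\Pi P}(p)}}$-submodules to reduce the $X$-sum to $\mu_{p^{o_{d\Pi P}(p)}}(H,U)$, and finally the substitution $l=\dim_{\f_{p^{o_{d\Pi P}(p)}}}(U/H)$ with \eqref{4.1.0} and \eqref{4.1.1}. The only slip is cosmetic: in your expression for $f_{\le}(U)$ the subsets $X$ should range over $\Bbb P_{\f_{p^{o_d(p)}}}(U/H)$ rather than $\Bbb P_{\f_p}(U/H)$, which does not affect the conclusion.
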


\begin{proof}
By \eqref{4.1}, \eqref{2.2}, \eqref{4.21}, and \eqref{3.21},
\begin{align}\label{4.22}
&\mathcal N(S(\gamma^{(q-1)/d},0,H),k)\\
=\,&\sum_{\substack{P\subset \mathcal P((|H'|-1)/d)\cr X\subset{\Bbb P\,_{\f}}\!_{p^{o_d(p)}}(\f_q/H)}}(-1)^{|P|+|X|}s_{q,k}\Bigl(d\,\Pi P,\,\Bigl|\Bigl\langle \phi^{-1}\Bigl(\Bigl\langle\bigcup_{x\in X}x\Bigr\rangle\Bigr)\Bigr\rangle^{\f_{p^{o_{d\Pi P}(p)}}}\Bigr|\Bigr)\cr
=\,&\sum_{\substack{P\subset \mathcal P((|H'|-1)/d)\cr U:\,\f_{p^{o_{d\Pi P}(p)}}\text{-module}\cr H\subset U\subset\f_q}}(-1)^{|P|}s_{q,k}(d\,\Pi P, |U|)\sum_{\substack{X\subset{\Bbb P\,_{\f}}\!_{p^{o_d(p)}}(\f_q/H)\cr \langle\phi^{-1}(\langle\bigcup_{x\in X}x\rangle)\rangle^{\f_{p^{o_{d\Pi P}(p)}}}=U}}(-1)^{|X|}. \nonumber
\end{align}
By the same argument that led to \eqref{4.16}, we find that the inner sum in \eqref{4.22} equals $\mu_{p^{o_{d\Pi P}(p)}(p)}(H,U)$. Hence 
\begin{align}\label{4.23}
&\mathcal N(S(\gamma^{(q-1)/d},0,H),k)\\
=\,&\sum_{\substack{P\subset \mathcal P((|H'|-1)/d)\cr U:\,\f_{p^{o_{d\Pi P}(p)}}\text{-module},\, H\subset U\subset\f_q}}(-1)^{|P|}s_{q,k}(d\Pi P, |U|)\,\mu_{p^{o_{d\Pi P}(p)}}(H,U). \nonumber
\end{align}
Let $l=\dim_{\f_{p^{o_{d\Pi P}(p)}}}(U/H)$. Then
\begin{align*}
&\mathcal N(S(\gamma^{(q-1)/d},0,H),k)=\cr
&\sum_{P\subset \mathcal P((|H'|-1)/d)}\,\sum_{0\le l\le(\alpha-\beta)/o_{d\Pi P}(p)}(-1)^{|P|+l}p^{o_{d\Pi P}(p)\binom l2}{{(\alpha-\beta)/o_{d\Pi P}(p)}\brack l}_{p^{o_{d\Pi P}(p)}}\cr
\noalign{\vspace{1mm}}
&\cdot s_{q,k}(d\,\Pi P,\, p^{\beta+lo_{d\Pi P}(p)}).
\end{align*}
\end{proof}

\begin{rmk}\label{R4.4}\rm
\begin{itemize}
\item[] \hspace{-10mm} (i)\; It follows from \eqref{4.3.0}, \eqref{3.21} and Remark~\ref{R3.5} that $\mathcal N(S(\gamma^{(q-1)/d},$ $0,H),k)=0$ unless $k\equiv 0$ or $p^\beta \pmod{dp^\beta}$, where $d\mid p-1$ and $|H|=p^\beta$.

\medskip

\item[(ii)]
Assume that $k=0$ or $q$. It is clear that 
\begin{equation}\label{4.25}
\mathcal N(S,0)=\mathcal N(S,q)=
\begin{cases}
1&\text{if}\ S=S(\gamma,0,\f_q),\cr
0&\text{otherwise}.
\end{cases}
\end{equation}

\medskip

\item[(iii)] Assume that $k=1$. Let $x\in\f_q$. For $a\in\f_q^*$ and $b\in\f_q$, $ax+b=x$ if and only if $b=(1-a)x$. Hence
\[
\text{AGL}(1,\f_q)_{\{x\}}=\Bigl\{\left[\begin{matrix} a&(1-a)x\cr 0&1\end{matrix}\right]:a\in\f_q^*\Bigr\}=S(\gamma,(1-\gamma)x,0).
\]
Therefore for $S<\text{AGL}(1,\f_q)$,
\begin{equation}\label{4.24}
\mathcal N(S,1)=
\begin{cases}
2&\text{if}\ q=2,\ S=S(1,0,0),\cr
1&\text{if}\ q>2,\ S=S(\gamma,b,0),\ b\in\f_q,\cr
0&\text{otherwise}.
\end{cases}
\end{equation}

\medskip

\item[(iv)] Assume that $k=2$. Let $\{x,y\}\in\binom{\f_q}2$. For $a\in\f_q^*$ and $b\in\f_q$,
\[
\begin{cases}
ax+b=y,\cr
ay+b=x
\end{cases}
\]
if and only if $a=-1$ and $b=x+y$. Hence
\[
\text{AGL}(1,\f_q)_{\{x,y\}}=\Bigl\{\left[\begin{matrix} 1&0\cr 0&1\end{matrix}\right],\ \left[\begin{matrix} -1&x+y\cr 0&1\end{matrix}\right]\Bigr\}=
\begin{cases}
S(1,0,(x+y)\f_2)&\text{if $q$ is even},\cr
S(-1,x+y,0)&\text{if $q$ is odd}.
\end{cases}
\]
Hence 
\begin{equation}\label{4.26}
\mathcal N(S,2)=
\begin{cases}
q/2&\text{if $q$ is even},\ S=S(1,0,H),\ |H|=2, \cr
(q-1)/2&\text{if $q$ is odd},\ S=S(\gamma^{(q-1)/2},b,0),\ b\in\f_q,\cr
0&\text{otherwise}.
\end{cases}
\end{equation}

\medskip

\item[(v)] Statements (i) -- (iv) agree with \eqref{4.10.0} and \eqref{4.21.0} as they should. Obviously, \eqref{4.10.0} and \eqref{4.21.0} imply (i). However, to derive \eqref{4.25} -- \eqref{4.26} from \eqref{4.10.0} and \eqref{4.21.0}, it requires some computation; we leave the details to the reader.
\end{itemize}
\end{rmk}


\section{Computation of $\mathcal N(S(\gamma^{(q-1)/d},0,H),k)$}\label{s5}

The formulas \eqref{4.10.0} and \eqref{4.21.0} for $\mathcal N(S(\gamma^{(q-1)/d},0,H),k)$ depend on $q$, $k$, $d$, $|H|$ and $|H'|$ but not on $H$. Thus, with $q$, $k$, $d$ given, we only have to know the possible pairs of values $(|H|,|H'|)$.

Recall that $q=p^\alpha$ and $\f_q(\gamma^{(q-1)/d})=\f_{p^{o_d(p)}}$.
 
\begin{lem}\label{L5.1}
The possible values of $(|H|,|H'|)$, where $H\subset\f_q$ is an $\f_{p^{o_d(p)}}$-subspace, are precisely
\[
(1,p^\alpha),\ (p^\alpha,p^\alpha),\ \text{and}\ (p^{o_d(p)ij},p^{o_d(p)i}),\ i\mid(\alpha/o_d(p)),\ i<\alpha/o_d(p),\ 0<j<\alpha/o_d(p)i.
\]
\end{lem}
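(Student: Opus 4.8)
The claim describes which pairs $(|H|,|H'|)$ occur as $H$ ranges over $\f_{p^{o_d(p)}}$-subspaces of $\f_q$. First recall the meaning of the two quantities: $H'=\{x\in\f_q:xH\subset H\}$ is the largest subfield of $\f_q$ over which $H$ is a module, so $H$ is automatically an $H'$-vector space, and in particular $|H|$ is a power of $|H'|$ once $H\ne 0$. The hypothesis that $H$ is an $\f_{p^{o_d(p)}}$-subspace forces $\f_{p^{o_d(p)}}\subseteq H'$, hence $o_d(p)\mid \log_p|H'|$ whenever $H\ne 0$. The plan is therefore to split into the degenerate cases $H=0$ and $H=\f_q$, and then treat the generic case $0\ne H\subsetneq\f_q$ by parametrizing via the subfield $K=H'$.

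\textbf{Key steps.} (1) If $H=\{0\}$ then $xH\subset H$ for every $x\in\f_q$, so $H'=\f_q$ and $(|H|,|H'|)=(1,p^\alpha)$; this subspace is vacuously an $\f_{p^{o_d(p)}}$-subspace. (2) If $H=\f_q$ then clearly $H'=\f_q$ and $(|H|,|H'|)=(p^\alpha,p^\alpha)$. (3) Now suppose $0\ne H\subsetneq\f_q$. Set $K=H'$; by the remark above $K$ is a subfield of $\f_q$ containing $\f_{p^{o_d(p)}}$, so $|K|=p^{o_d(p)i}$ for some positive integer $i$ with $o_d(p)i\mid\alpha$, i.e. $i\mid\alpha/o_d(p)$. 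Since $H$ is a $K$-vector space, $|H|=|K|^j=p^{o_d(p)ij}$ where $j=\dim_K H\ge 1$; the constraint $H\subsetneq\f_q$ forces $j<\dim_K\f_q=\alpha/(o_d(p)i)$, which gives $0<j<\alpha/o_d(p)i$. The maximality in the definition of $H'$ must also be used: $H'=K$ means there is no subfield strictly between $K$ and $\f_q$ over which $H$ is still a module; I will need to check this does not further restrict the admissible $(i,j)$, equivalently that for each such $(i,j)$ there genuinely exists a $K$-subspace $H$ of $\f_q$ of $K$-dimension $j$ with $H'$ exactly $K$. (4) For existence, given $K=\f_{p^{o_d(p)i}}$ with $i<\alpha/o_d(p)$ (so that $K\subsetneq\f_q$) and $1\le j<\alpha/(o_d(p)i)$, pick a $\f_q$-basis adapted to the tower $\f_q/K$ and let $H$ be the $K$-span of $j$ suitably chosen basis vectors; one then verifies that no larger subfield $K'$ satisfies $K'H\subset H$. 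The cleanest argument: if $K'$ is a subfield with $K'H\subset H$ and $H\ne 0$, then picking $0\ne h\in H$ we get $K'h\subset H$, and since $H$ is finite-dimensional over $K$, one shows $K'\subseteq K$ by a dimension/degree count using that $\dim_K(K'h)=[K':K\cap K'']$-type relations force $[K'(h):K]\le\dim_K H=j<[\f_q:K]$, together with a careful choice of $h$ so that $K'\subseteq K$ whenever $K'H\subset H$. Finally note that when $i=\alpha/o_d(p)$ we have $K=\f_q$ and $H=\f_q$, already covered in step (2); this is why the generic family carries the restriction $i<\alpha/o_d(p)$.

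\textbf{Main obstacle.} The routine direction (showing every occurring pair has the stated form) is straightforward from the definitions. The real work is the \emph{existence}/sharpness direction: for each triple $(i,j)$ in the asserted range, producing a concrete $\f_{p^{o_d(p)}}$-subspace $H$ whose stabilizing subfield $H'$ is exactly $\f_{p^{o_d(p)i}}$ and not larger. The potential pitfall is that a "random" $K$-subspace might accidentally be closed under multiplication by a bigger subfield; one must argue (e.g. by choosing $H$ to contain an element $h$ with $\f_q=K(h)$, or by a counting argument over all $K$-subspaces of a given dimension) that a generic choice has $H'=K$. I expect the slickest route is: among the $K$-subspaces of $\f_q$ of $K$-dimension $j<[\f_q:K]$, at least one fails to be a $K'$-module for every subfield $K'\supsetneq K$, because each such $K'$ imposes a proper linear condition and there are only finitely many intermediate subfields. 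Once this is in hand, combining steps (1)--(4) yields exactly the list in the lemma.
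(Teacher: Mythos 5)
Your case analysis and the ``routine'' direction (steps (1)--(3)) are fine and agree with what the paper needs: once $H\ne 0$, $H'$ is a subfield of $\f_q$ containing $\f_{p^{o_d(p)}}$, $H$ is an $H'$-space, and $H\subsetneq\f_q$ forces $0<j<[\f_q:H']$ and $H'\ne\f_q$. The problem is step (4), which you correctly identify as the crux but do not actually prove; neither of your two sketches closes it. The dimension count does not work as stated: from $K'H\subset H$ and $0\ne h\in H$ you only get that $H$ is a module over the compositum $KK'$, hence $[KK':K]\le j$; this does not yield $K'\subseteq K$, and indeed for some $H$ of the given dimension the stabilizing field genuinely is larger (e.g.\ $H=\f_{p^2}$ inside $\f_{p^4}$ with $K=\f_p$, $j=2$), so no argument valid for all $H$ can succeed -- you must exhibit a good $H$. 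Your counting alternative is a viable route, but the justification ``each $K'$ imposes a proper condition and there are only finitely many $K'$'' is not a proof over a finite field: a finite union of proper subsets can exhaust the whole set. What is actually needed is the estimate
\[
\sum_{e}{{n/e}\brack{j/e}}_{r^e}<{n\brack j}_r,
\]
the sum being over primes $e>1$ dividing both $n=[\f_q:K]$ and $j$ (where $r=|K|$); this is true, since the left side is of order $r^{j(n-j)/e}$ against $r^{j(n-j)}$ on the right, but it must be checked and you have not done so.

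The paper sidesteps counting entirely (its Lemma~5.2, to which Lemma~5.1 is reduced). The trick is to insist that $K\subseteq H$, so in particular $1\in H$; then $K'H\subseteq H$ forces $K'=K'\cdot 1\subseteq H$, so any subfield stabilizing $H$ is \emph{contained in} $H$, and it suffices to arrange that $\f_{r^e}\not\subseteq H$ for each prime $e\mid n$. Concretely, one picks $a_e\in\f_{r^e}\setminus\f_r$ for each prime $e\mid n$, extends these together with $1$ to an $\f_r$-basis of $\f_q$, takes the hyperplane $L$ on which the coordinates along the $a_e$ sum to zero (so $1\in L$ but $a_e\notin L$), and chooses any $H$ with $\f_r\subseteq H\subseteq L$ of the prescribed dimension $j\le n-1$. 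I recommend either adopting this explicit construction or supplying the counting estimate above; as written, the existence half of your argument is a plan rather than a proof.
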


\begin{proof}
This follows from Lemma~\ref{L5.2}.
\end{proof}

\begin{lem}\label{L5.2}
Let $\f_r\subset\f_q$ and $0\le d\le[\f_q:\f_r]$. Then there exists $H<\f_q$ such that $H'=\f_r$ and $\dim_{\f_r}H=d$ if and only if 
\begin{itemize}
\item[(i)] $[\f_q:\f_r]=1$, or
\item[(ii)] $[\f_q:\f_r]>1$ and $0<d<[\f_q:\f_r]$.
\end{itemize}
\end{lem}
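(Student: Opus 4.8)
\textbf{Proof proposal for Lemma~\ref{L5.2}.}

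The plan is to analyze the structure of $H$ as an $\f_r$-subspace of $\f_q$ together with the condition $H'=\f_r$, where recall $H'=\{x\in\f_q:xH\subset H\}$ is the largest subfield over which $H$ is a module. The key observation is that $H'$ is automatically a subfield of $\f_q$ containing $\f_r$ (since $H$ is an $\f_r$-module, $\f_r\subset H'$), so the requirement $H'=\f_r$ is exactly the requirement that $H$ be an $\f_r$-module but not an $\f_s$-module for any intermediate field $\f_r\subsetneq\f_s\subset\f_q$.

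For the case $[\f_q:\f_r]=1$, i.e. $\f_r=\f_q$: here every $\f_q$-subspace $H$ is either $\{0\}$ or all of $\f_q$, and in both cases $H'=\f_q=\f_r$, so the condition holds for $d=0$ and $d=1=[\f_q:\f_r]$; this gives (i). For the case $[\f_q:\f_r]>1$, I would argue both directions. First, the \emph{necessity} of $0<d<[\f_q:\f_r]$: if $d=0$ then $H=\{0\}$, and $x\cdot\{0\}\subset\{0\}$ for every $x\in\f_q$, so $H'=\f_q\ne\f_r$; if $d=[\f_q:\f_r]$ then $H=\f_q$, and again $H'=\f_q\ne\f_r$. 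Hence $H'=\f_r$ forces $0<d<[\f_q:\f_r]$. For \emph{sufficiency}, given $0<d<[\f_q:\f_r]=:m$, I need to exhibit an $\f_r$-subspace $H$ of dimension $d$ with $H'=\f_r$. The cleanest approach: count. The number of $d$-dimensional $\f_r$-subspaces of $\f_q$ is the Gaussian binomial ${m\brack d}_r$, which is strictly positive. Any such $H$ that fails $H'=\f_r$ must be an $\f_s$-module for some field $\f_r\subsetneq\f_s\subset\f_q$; but then $\dim_{\f_r}H=[\f_s:\f_r]\cdot\dim_{\f_s}H$ must be divisible by $[\f_s:\f_r]>1$, and $H$ is one of the ${m/[\f_s:\f_r]\brack d/[\f_s:\f_r]}_{r^{[\f_s:\f_r]}}$ such $\f_s$-subspaces. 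So it suffices to show
\[
{m\brack d}_r\;>\;\sum_{\substack{\f_r\subsetneq\f_s\subset\f_q\\ [\f_s:\f_r]\mid d}}{m/[\f_s:\f_r]\brack d/[\f_s:\f_r]}_{r^{[\f_s:\f_r]}},
\]
which holds because the left side grows like $r^{d(m-d)}$ while each term on the right, indexed by $e=[\f_s:\f_r]\ge 2$, is of size roughly $r^{e\cdot(d/e)(m/e-d/e)}=r^{d(m-d)/e}$, exponentially smaller; a crude bound on the number of intermediate fields (at most $m$) closes the gap for all $r\ge 2$ and $m\ge 2$. Alternatively, and perhaps more elegantly, one can construct $H$ explicitly: pick $\alpha\in\f_q$ with $\f_r(\alpha)=\f_q$ (a primitive element of the extension), and take $H=\langle 1,\alpha,\dots,\alpha^{d-1}\rangle^{\f_r}$; if $xH\subset H$ for some $x\in\f_q\setminus\f_r$, then in particular $x\cdot 1=x\in H\subset\f_r(\alpha)$ is fine, but $x\cdot\alpha^{d-1}\in H$ forces a nontrivial $\f_r$-linear relation among $1,\alpha,\dots,\alpha^{d}$ unless $x\in\f_r$ — one has to be slightly careful here since $d$ could be small, so I would instead use the companion-matrix description: $\f_q\cong\f_r[t]/(f(t))$ with $f$ the minimal polynomial of $\alpha$ of degree $m$, and show the truncation $H=\{g(t)+(f):\deg g<d\}$ has $H'=\f_r$ by noting that multiplication by $t$ already sends $t^{d-1}$ outside $H$ when $d<m$.

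I expect the main obstacle to be the sufficiency direction in case (ii), specifically making the construction of an $H$ with $H'$ \emph{exactly} $\f_r$ (not a larger field) fully rigorous in the small-$d$ edge cases; the counting argument is robust but requires a clean inequality on Gaussian binomials, while the explicit construction is slicker but needs the right normal form to avoid fussing over low-dimensional degeneracies. Given that the ambient excerpt only invokes Lemma~\ref{L5.2} through Lemma~\ref{L5.1}, where the relevant $\f_r$ is $\f_{p^{o_d(p)}}$ and dimensions are already restricted, I would present the counting argument as the main proof since it handles all cases uniformly, and remark that the explicit truncated-polynomial construction gives an alternative. The remaining parts — case (i) and the necessity in case (ii) — are immediate from the observation that $H\in\{\{0\},\f_q\}$ both have $H'=\f_q$.
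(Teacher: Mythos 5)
Your treatment of case (i) and of necessity in case (ii) coincides with the paper's (which simply declares the forward direction obvious), but your sufficiency argument takes a genuinely different route. The paper constructs $H$ explicitly: with $n=[\f_q:\f_r]$ and $e_1,\dots,e_l$ the distinct primes dividing $n$, it picks $a_i\in\f_{r^{e_i}}\setminus\f_r$, notes that $1,a_1,\dots,a_l$ are $\f_r$-linearly independent, builds an $\f_r$-hyperplane $L$ with $\f_r\subset L$ and $a_i\notin L$ for all $i$, and takes any $H$ with $\f_r\subset H\subset L$ and $\dim_{\f_r}H=d$. Since $H'$ is a subfield containing $\f_r$, $H'\ne\f_r$ would force $\f_{r^{e_i}}\subset H'$ for some $i$, and then $1\in H$ would give $a_i=a_i\cdot 1\in H$, a contradiction. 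This handles all $0<d<n$ uniformly with no counting. Your counting strategy is a legitimate alternative and in fact proves slightly more (it shows \emph{most} $d$-dimensional subspaces have $H'=\f_r$), but it is longer to make airtight.

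Two concrete repairs are needed. First, the asserted crude bound does not close the gap for small parameters: for $r=2$, $m=4$, $d=2$ the left side is ${4\brack 2}_2=35$ and the only right-hand term is ${2\brack 1}_4=5$, so the inequality is true, but your estimate compares $r^{d(m-d)}=16$ against (number of intermediate fields) times roughly $r^{d(m-d)/2}=4$, which is borderline and turns false once the multiplicative constant in ${m\brack d}_r\le C\,r^{d(m-d)}$ is accounted for; similar near-failures occur for $r=2$, $m\in\{6,8\}$. You should restrict the sum to prime values of $e=[\f_s:\f_r]$ (legitimate, since an $\f_s$-module is an $\f_{r^e}$-module for every prime $e\mid[\f_s:\f_r]$), sharpen the bounds, and verify the remaining finitely many small cases directly. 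Second, your fallback explicit construction is incomplete as stated: showing that multiplication by $t$ sends $t^{d-1}$ outside $H$ only proves $\alpha\notin H'$, whereas $H'$ could a priori be a proper intermediate field not containing $\alpha$. To finish, take any $x\in H'\setminus\f_r$; since $1\in H$, $x=g(\alpha)$ with $1\le j:=\deg g\le d-1$, and then $\alpha^{d-j}\in H$ while $g(t)t^{d-j}$ has degree exactly $d<m$ and hence is already reduced mod $f$, so $x\alpha^{d-j}\notin H$ --- a contradiction. With either repair the lemma follows, but the paper's hyperplane construction sidesteps both issues.
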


\begin{proof}
The proof is essentially the same as that of \cite[Theorem~4.14]{Sun-TJM-2010}.

\medskip

($\Rightarrow$) Obvious.

\medskip
($\Leftarrow$) Let $n=[\f_q:\f_r]$. If $n=1$, let $H=0$ for $d=0$ and $H=\f_q$ for $d=1$. Assume that $n>1$ and $0<d<n$. Let $e_1,\dots,e_l$ be the distinct prime factors of $n$. Clearly, $l\le n-1$. Choose $a_i\in\f_{r^{e_i}}\setminus\f_r$, $1\le i\le l$. Clearly, $a_1,\dots,a_l, a_{l+1}(=1)$ are linearly independent over $\f_r$. Extend them to an $\f_r$-basis $a_i$, $1\le i\le n$, of $\f_q$. Let $L=\{\sum_{i=1}^nx_ia_i:x_i\in\f_r,\ x_1+\cdots+ x_l=0\}\subset\f_q$. Then $\dim_{\f_r}L=n-1$, $\f_r\subset L$, and $a_i\notin L$ for all $1\le i\le l$. Let $H$ be an $\f_r$-subspace of $L$ such that $\f_r\subset H$ and $\dim_{\f_r}H=d$. For each $1\le i\le l$, since $1\in H$ and $a_i\notin H$, we have $\f_{r^{e_i}}\not\subset H'$. Hence $H'=\f_r$.
\end{proof}

The input data for \eqref{4.10.0} and \eqref{4.21.0} are $p,\alpha,k,d,i,j$, where $0\le k\le p^\alpha/2$, $d\mid p^\alpha-1$, $i\mid(\alpha/o_d(p))$, $j=0,1$ if $i=\alpha/o_d(p)$ and $0<j<\alpha/o_d(p)i$ if $i<\alpha/o_d(p)$, $k\equiv 0$ or $p^{o_d(p)ij}\pmod{dp^{o_d(p)ij}}$. These data produce $q=p^\alpha$, $|H|=p^\beta$, where $\beta=o_d(p)ij$, and $|H'|=p^{o_d(p)i}$. Afterwards, $\mathcal N(S(\gamma^{(q-1)/d},0,H),k)$ can be computed (with computer assistance) by \eqref{4.10.0} (for $d=1$) and by \eqref{4.21.0} (for $d>1$). A Mathematica code is included in Appendix~A1; the computational results for $q\le 101$ are given in Table~\ref{Tb1} in Appendix~A2.

\section{Remarks and Open Questions}

Let $\Bbb S_q$ denote the set of all subgroups of $\text{AGL}(1,\f_q)$. Fix $0\le k\le q$. There are two maps between $\Bbb S_q$ and $\binom{\f_q} k$:
\begin{equation}\label{6.1}
\begin{array}{cccl}
(\ )':& \Bbb S_q& \longrightarrow & \displaystyle \binom{\f_q} k \vspace{2mm}\cr
& S&\longmapsto & S'=\Bigl\{B\in \displaystyle \binom{\f_q}k: \sigma(B)=B\ \text{for all}\ \sigma\in S\Bigr\},
\end{array}
\end{equation}
and
\begin{equation}\label{6.2}
\begin{array}{cccl}
(\ )':& \displaystyle \binom{\f_q} k& \longrightarrow & \Bbb S_q \vspace{2mm}\cr
& B&\longmapsto & B'=\text{AGL}(1,\f_q)_B.
\end{array}
\end{equation}
These two maps enjoy the properties of a ``Galois correspondence'': both $(\ )'$ are inclusion reversing; $S\subset S''$ and $B\subset B''$ for all $S\in\Bbb S_q$ and $B\in\binom{\f_q}k$; $(\ )'''=(\ )'$. The map $(\ )':\Bbb S_q\to \binom{\f_q}k$ has been described in Subsection~\ref{s3.3}. However, the map $(\ )':\binom{\f_q}k\to \Bbb S_q$ is not well understood; in particular, Question~\ref{Q1.2} asks about the image of this map

Theorem~\ref{T4.2} and Theorem~\ref{T4.3} show that one can ``compute'' an answer to Question~\ref{Q1.2} (for $\text{AGL}(1,\f_q)$) by computing the function $\mathcal N=\mathcal N(S(\gamma^{(q-1)/d},0,H),k)$. We do not know if it is possible to find a more theoretic and general solution to Question~\ref{Q1.2} for $\text{AGL}(1,\f_q)$.

Sun \cite{Sun-TJM-2010} gave several constructions of $B\in\binom{\f_q}k$ with specified stabilizers. Those results can be stated in terms of nonvanishing of the function $\mathcal N$ as follows:

\begin{thm}[{\cite[Theorem~4.14]{Sun-TJM-2010}}, Lemma~\ref{L5.1}]\label{T6.1}
Let $H<\f_q$ with $H'=\f_{p^i}$. Then
\[
\text{\rm AGL}(1,\f_q)_H=S(\gamma^{(q-1)/(p^i-1)},0,H).
\]
In particular, for $(\beta,i)=(0,\alpha)$ or $(\alpha,\alpha)$, or $0<\beta<\alpha$ and $i\mid \text{\rm gcd}(\alpha,\beta)$, there exists $H<\f_q$ with $|H|=p^\beta$ and $|H'|=p^i$ such that 
\[
\mathcal N(S(\gamma^{(q-1)/(p^i-1)}, 0,H),\, p^\beta)>0.
\]
\end{thm}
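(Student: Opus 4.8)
The statement has two parts: first, the explicit identification $\text{AGL}(1,\f_q)_H=S(\gamma^{(q-1)/(p^i-1)},0,H)$ when $H'=\f_{p^i}$; and second, a translation of this (together with Lemma~\ref{L5.1}) into the nonvanishing statement $\mathcal N(S(\gamma^{(q-1)/(p^i-1)},0,H),\,p^\beta)>0$ for the indicated parameter triples. The plan is to prove the first part directly and then read off the second part as a corollary.

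\textbf{Step 1: the stabilizer of $H$.} I would compute $\text{AGL}(1,\f_q)_H$ from scratch. A matrix $\left[\begin{smallmatrix} a & b\cr 0 & 1\end{smallmatrix}\right]$ sends $H$ to $aH+b$; since $0\in H$ this forces $b=a\cdot h$ for some $h\in H$, equivalently $b\in aH=H$ (using that $H$ is a group under addition), so $b\in H$. Conversely, given $b\in H$, the condition $aH+b=H$ reduces to $aH=H$. Now $aH=H$ (with $H$ an additive subgroup containing, after scaling, a nonzero element, hence an $\f_p$-subspace) says precisely that $a\in H'=\f_{p^i}$; more carefully, $aH\subseteq H$ forces $a\in H'=\f_{p^i}^{\,}$ by definition of $H'$, and since $H'=\f_{p^i}$ is a field and $a\neq 0$ we get $aH=H$ automatically. (If $H=0$ then $H'=\f_q$, $p^i-1=q-1$, and the claim reads $\text{AGL}(1,\f_q)_{\{0\}}=S(\gamma,0,0)$, which is immediate; if $H=\f_q$ then $H'=\f_q$ again and the claim is trivial.) Therefore
\[
\text{AGL}(1,\f_q)_H=\Bigl\{\left[\begin{matrix} a & b\cr 0 & 1\end{matrix}\right]: a\in\f_{p^i}^*,\ b\in H\Bigr\}.
\]
This set equals $\langle a\rangle$-part times $\overline H$; the group $\f_{p^i}^*$ is cyclic of order $p^i-1$, generated by $\gamma^{(q-1)/(p^i-1)}$ (here I use that $\gamma$ generates $\f_q^*$ and $p^i-1\mid q-1$), and $H$ is an $\f_{p^i}$-subspace, so $(\gamma^{(q-1)/(p^i-1)},0,H)\in\mathcal S$. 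By the semidirect-product description \eqref{3.11} and \eqref{3.13}, $S(\gamma^{(q-1)/(p^i-1)},0,H)$ is exactly this set. That finishes the identification.

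\textbf{Step 2: nonvanishing of $\mathcal N$.} By definition \eqref{4.1}, $\mathcal N(S,k)$ counts $B\in\binom{\f_q}k$ with $\text{AGL}(1,\f_q)_B=S$. Taking $B=H$ and $k=|H|=p^\beta$, Step~1 gives $\text{AGL}(1,\f_q)_H=S(\gamma^{(q-1)/(p^i-1)},0,H)$, so $H$ is counted and $\mathcal N(S(\gamma^{(q-1)/(p^i-1)},0,H),p^\beta)\ge 1>0$. It remains to observe that the parameter regime stated — $(\beta,i)=(0,\alpha)$ or $(\alpha,\alpha)$, or $0<\beta<\alpha$ with $i\mid\text{gcd}(\alpha,\beta)$ — is exactly the set of $(\beta,i)$ for which an $H<\f_q$ with $|H|=p^\beta$ and $|H'|=p^i$ exists. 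For $d=1$ we have $o_d(p)=1$, so Lemma~\ref{L5.1} says the attainable $(|H|,|H'|)=(p^\beta,p^i)$ are $(1,p^\alpha)$, $(p^\alpha,p^\alpha)$, and $(p^{ij},p^i)$ with $i\mid\alpha$, $i<\alpha$, $0<j<\alpha/i$; writing $\beta=ij$ in the last family is precisely $0<\beta<\alpha$ with $i\mid\beta$ and $i\mid\alpha$, i.e. $i\mid\text{gcd}(\alpha,\beta)$. (One should also note $p^i-1\mid q-1$ since $i\mid\alpha$, so the subscript $(q-1)/(p^i-1)$ is an integer and $\gamma^{(q-1)/(p^i-1)}$ really generates $\f_{p^i}^*$.) Invoking Lemma~\ref{L5.1} to produce such an $H$ and then applying Step~1 completes the proof.

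\textbf{Main obstacle.} There is no serious difficulty here: the content is entirely in Step~1, and even that is a short direct computation once one unwinds the definition of $H'$ as the largest subfield over which $H$ is a module. The only points requiring a little care are the degenerate cases $H=0$ and $H=\f_q$ (where $p^i-1=q-1$), the verification that $\gamma^{(q-1)/(p^i-1)}$ genuinely generates $\f_{p^i}^*$, and matching the index set of Lemma~\ref{L5.1} with the condition $i\mid\text{gcd}(\alpha,\beta)$; none of these is more than bookkeeping.
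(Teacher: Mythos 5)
Your proposal is correct, but it is worth noting that the paper itself does not argue the first identity at all: it is imported wholesale from Sun's Theorem~4.14 (the bracketed citation in the theorem header is the paper's entire ``proof'' of that half), and only the parameter bookkeeping is referred to Lemma~\ref{L5.1}. You instead prove the stabilizer identification from scratch, which is a legitimate and arguably preferable route since the computation is short: $b=a\cdot 0+b\in aH+b=H$ gives $b\in H$, whence $aH=H-b=H$, whence $a\in H'\cap\f_q^*=\f_{p^i}^*$, and the converse inclusion plus the order count $|S(\gamma^{(q-1)/(p^i-1)},0,H)|=(p^i-1)|H|$ from \eqref{3.13} closes the argument. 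One small wrinkle: as written, your justification that $b\in H$ invokes $aH=H$ before that equality has been established (``$b\in aH=H$''); the fix is either the one-line computation just given, or the observation that $H=aH+b$ is a coset of the subgroup $aH$ that contains $0$, hence equals $aH$. Your Step~2 — taking $B=H$, $k=p^\beta$, so that $\mathcal N\ge 1$, and matching the family $(p^{ij},p^i)$, $i\mid\alpha$, $i<\alpha$, $0<j<\alpha/i$ of Lemma~\ref{L5.1} (with $d=1$, $o_1(p)=1$) to the condition $0<\beta<\alpha$, $i\mid\gcd(\alpha,\beta)$ — is exactly the paper's intended use of Lemma~\ref{L5.1}, and your handling of the degenerate pairs $(0,\alpha)$ and $(\alpha,\alpha)$ is consistent with \eqref{4.24} and \eqref{4.25}.
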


\begin{thm}[{\cite[Theorem~3.5]{Sun-TJM-2010}}]\label{T6.2}
Assume that $3\le k\le q/2$, $p\nmid k$, $d\mid q-1$, $k\equiv 0$ or $1\pmod d$, and $(q,k,d)\ne(7,3,1)$. Then
\[
\mathcal N(S(\gamma^{(q-1)/d}, 0,\{0\}),\, k)>0.
\]
\end{thm}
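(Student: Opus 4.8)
The statement in question is Theorem~\ref{T6.2}, which asserts that under the stated hypotheses on $(q,k,d)$, the number $\mathcal N(S(\gamma^{(q-1)/d},0,\{0\}),k)$ of $k$-subsets of $\f_q$ whose full $\text{AGL}(1,\f_q)$-stabilizer is exactly $S(\gamma^{(q-1)/d},0,\{0\})$ is strictly positive. Since $p\nmid k$, the hypotheses $|H|=1$ (i.e. $\beta=0$) and $k\equiv 0$ or $1\pmod d$ mean the relevant congruence condition from Remark~\ref{R4.4}(i) is satisfied, so the formula \eqref{4.21.0} (with $H=\{0\}$, $|H'|=q-1$) is in principle available; but extracting positivity from the alternating sum directly looks painful, so I would not attack \eqref{4.21.0} head-on.

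The plan is to argue existentially rather than by evaluating the formula. First I would observe that by the ``Galois correspondence'' remarks in Section~6 (or directly), a $k$-set $B$ has $\text{AGL}(1,\f_q)_B = S(\gamma^{(q-1)/d},0,\{0\})$ precisely when (a) $B$ is stabilized by the cyclic group $\langle\gamma^{(q-1)/d}\rangle$ acting by multiplication, i.e. $B$ is a union of orbits of $\langle\gamma^{(q-1)/d}\rangle$ on $\f_q$, and (b) no strictly larger subgroup fixes $B$. By the orbit analysis of Subsection~\ref{s3.3} with $H=\{0\}$, the orbits of $S(\gamma^{(q-1)/d},0,\{0\})$ on $\f_q$ are $\{0\}$ together with $(q-1)/d$ cosets of the subgroup $\langle\gamma^{(q-1)/d}\rangle$ of $\f_q^*$, each of size $d$. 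Since $k\equiv 0$ or $1\pmod d$, such $B$ exist in abundance: one chooses $k/d$ (resp. $(k-1)/d$, plus the point $0$) of these size-$d$ orbits. So the content of the theorem is entirely condition (b): one must show that among the $s_{q,k}(d,1)$ candidate sets, at least one is not fixed by any larger subgroup. By Proposition~\ref{P3.3}(ii) and Proposition~\ref{P3.2}, the immediate supergroups of $S(\gamma^{(q-1)/d},0,\{0\})$ are of two types: the ``dilation'' supergroups $S(\gamma^{(q-1)/de},0,\{0\})$ for primes $e\mid (q-1)/d$, and the ``translation'' supergroups $S(\gamma^{(q-1)/d},0,\phi^{-1}(x))$ for lines $x\in\Bbb P_{\f_{p^{o_d(p)}}}(\f_q)$ — that is, $S(\gamma^{(q-1)/d},0,K)$ where $K$ is a $1$-dimensional $\f_{p^{o_d(p)}}$-subspace of $\f_q$.

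So I would reduce to: construct one union $B$ of $\lceil k/d\rceil$ or so of the multiplicative orbits (together with $0$ when $k\equiv 1$) that is fixed by none of these finitely many supergroups. Ruling out a dilation supergroup $S(\gamma^{(q-1)/de},0,\{0\})$ means ensuring $B$ is not a union of the coarser orbits of $\langle\gamma^{(q-1)/de}\rangle$, i.e. $B$ must ``split'' at least one $\langle\gamma^{(q-1)/de}\rangle$-orbit; since an orbit of $\langle\gamma^{(q-1)/de}\rangle$ is a union of $e$ orbits of $\langle\gamma^{(q-1)/d}\rangle$, we just need $B$ to contain some but not all of those $e$ sub-orbits — easily arranged unless $k$ is forced to be a multiple of every such $de$, which the hypothesis $3\le k\le q/2$ together with $p\nmid k$ prevents (the excluded case $(7,3,1)$ is exactly where this bookkeeping fails and $B$ is forced to be a full line or a full multiplicative subgroup). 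Ruling out a translation supergroup $S(\gamma^{(q-1)/d},0,K)$, $\dim_{\f_{p^{o_d(p)}}}K=1$, means ensuring $B\neq B+h$ for some $h\in K\setminus\{0\}$; since $B+K$ would have to be a union of cosets of $K$, this is a genuine constraint only when $B$ happens to be a union of cosets of such a $K$, and one shows that a generic choice of $k/d$ multiplicative orbits is not of this form — here the counting $k\le q/2$ and $p\nmid k$ again guarantee enough room. I would organize this as: (1) set up the two families of supergroups via Propositions~\ref{P3.2}--\ref{P3.3}; (2) translate ``$\text{AGL}(1,\f_q)_B$ is strictly larger'' into ``$B$ is a union of coarser orbits'' for each type; (3) a counting / pigeonhole argument: the number of candidate $B$'s fixed by \emph{some} larger subgroup is strictly less than $s_{q,k}(d,1)$, the total number of candidates, using the inclusion–exclusion skeleton of \eqref{2.2} but only needing a crude upper bound on the ``bad'' sets rather than an exact evaluation; (4) check the small/degenerate cases by hand, identifying $(7,3,1)$ as the sole genuine exception.

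The main obstacle is step (3): getting a clean inequality ``number of bad $B$'s $<$ total'' uniformly in $q,k,d$. The translation supergroups are the delicate ones, because for each $1$-dimensional $\f_{p^{o_d(p)}}$-subspace $K$ the sets that are unions of $K$-cosets \emph{and} unions of multiplicative orbits can be numerous when $\gcd$ relations align, so a naive union bound over all such $K$ may not beat $s_{q,k}(d,1)$ for the smallest admissible $q$. I expect the fix is to note that a set which is simultaneously a union of $\langle\gamma^{(q-1)/d}\rangle$-orbits and a union of cosets of an additive line $K$ is extremely rigid — in fact forces $B$ (or $\f_q\setminus B$, using \eqref{4.4.0}) to be a union of cosets of the subfield-like structure $\langle K\rangle^{\f_{p^{o_d(p)}}}\cdot\langle\gamma^{(q-1)/d}\rangle$-action — and then the constraint $3\le k\le q/2$, $p\nmid k$ leaves exactly one exceptional triple. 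This rigidity claim is really a statement about when a multiplicatively-invariant set can be additively periodic, and pinning it down cleanly (probably via a short lemma of independent interest, or by invoking the structure of subfields as in Lemma~\ref{L5.2} / \cite[Theorem~4.14]{Sun-TJM-2010}) is where the real work lies; everything else is routine bookkeeping with the orbit sizes from Subsection~\ref{s3.3}.
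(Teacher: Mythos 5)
First, a point of orientation: the paper does not prove this statement at all --- it is quoted verbatim from Sun \cite[Theorem~3.5]{Sun-TJM-2010} (whose proof is by explicit construction of the sets $B$), and the paper merely remarks that its numerical values of $\mathcal N$ are consistent with it. So your attempt must stand on its own, and as written it does not: the decisive step is missing. Your steps (1)--(2) are fine and match the paper's machinery (the orbits of $S(\gamma^{(q-1)/d},0,\{0\})$ are $\{0\}$ and the $(q-1)/d$ cosets of $\langle\gamma^{(q-1)/d}\rangle$, and the candidates number $s_{q,k}(d,1)$), but step (3) --- the inequality ``number of $B$ fixed by some strictly larger subgroup $<s_{q,k}(d,1)$, uniformly in $(q,k,d)$ except for $(7,3,1)$'' --- is exactly the content of the theorem, and you leave it at ``I expect the fix is\dots''. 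A union bound over the supergroups is genuinely delicate near the boundary of the hypotheses (small $q$, $k$ close to $q/2$, $d$ with many prime factors), and the rigidity claim you invoke for the translation supergroups (a multiplicatively invariant set that is additively periodic must be a union of cosets of a subfield-module) is plausible but unproved. Without these, the argument is a plan, not a proof.

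There is also a concrete error in your setup for the case $d=1$, which is precisely where the exceptional triple $(7,3,1)$ lives. For $d=1$ the group $S(1,0,\{0\})$ is trivial, so Proposition~\ref{P3.3}(ii) does not apply; by Proposition~\ref{P3.3}(i) and \eqref{4.2}--\eqref{4.3} its immediate supergroups include $S(\gamma^{(q-1)/e},b,\{0\})$ for \emph{every} $b\in\f_q$, i.e.\ the order-$e$ cyclic groups fixing each of the $q$ points, not just the one fixing $0$. Your two-family description (dilations fixing $0$, plus additive translations) omits these $q-1$ conjugate families, and for $d=1$ the candidate sets are all of $\binom{\f_q}{k}$ rather than unions of multiplicative orbits, so the entire counting scheme changes. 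Correspondingly, your assertion that $(7,3,1)$ ``is exactly where this bookkeeping fails'' is not verified; in that case one must check by hand that every $3$-subset of $\f_7$ has a nontrivial stabilizer, which involves exactly the point-stabilizer subgroups your enumeration drops. To repair the attempt you would need either to carry out the inclusion--exclusion of \eqref{2.2} with correct supergroup lists in both regimes $d=1$ and $d>1$ and prove the resulting inequality, or to follow Sun's route and exhibit an explicit $B$ in each case; as it stands neither is done.
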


\begin{thm}[{\cite[Theorem~4.4]{Sun-TJM-2010}}]\label{T6.3}
Assume that $3\le k\le q/2$ and
\[
k\equiv
\begin{cases}
2\pmod 4&\text{if}\ p=2,\cr
0\pmod p&\text{if}\ p>2.
\end{cases}
\]
Then
\[
\mathcal N(S(1, 0,\{0\}),\, k)>0.
\]
(This corresponds to $d=1$ and $\beta=0$ in \eqref{para-cond}.)
\end{thm}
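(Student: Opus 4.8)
\textbf{Proof proposal for Theorem~\ref{T6.3}.}
The plan is to recover this as a special case of Theorem~\ref{T4.2} evaluated at $d=1$ and $H=\{0\}$ (so $\beta=0$, $|H'|=q=p^\alpha$). First I would specialize \eqref{4.10.0}: with $\beta=0$ the first (outer) sum runs over $0\le l\le\alpha$ with binomial terms $s_{q,k}(1,p^l)$, and the second sum runs over all $P\subset\mathcal P(p^\alpha-1)$ with terms $s_{q,k}(\Pi P,\,p^{lo_{\Pi P}(p)})$. Since $3\le k\le q/2$, the formula $s_{q,k}(u,v)=\binom{(q-v)/uv}{k/uv}+\binom{(q-v)/uv}{(k-v)/uv}$ vanishes unless $k\equiv 0$ or $v\pmod{uv}$ (Remark~\ref{R3.5}); so under the hypothesis that $k\equiv 2\pmod 4$ (when $p=2$) or $k\equiv 0\pmod p$ (when $p>2$), I would identify which pairs $(u,v)=(\Pi P,\,p^{lo_{\Pi P}(p)})$ actually contribute. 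The key arithmetic observation is that $p\mid k$ in the odd case (resp.\ $k/2$ is odd in the $p=2$ case) forces most $v=p^{lo_{\Pi P}(p)}$ with $l\ge 1$ to fail the divisibility condition, killing the bulk of the double sum and leaving only a small, explicitly controllable set of surviving terms.

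Second, I would reduce to showing the surviving sum is strictly positive. The dominant surviving term should be $l=0$, $P=\emptyset$ in the inner sum together with $l=0$ in the outer sum, which (after combining) gives a clean contribution of the form $\pm s_{q,k}(1,1)=\pm\left[\binom{q-1}{k}+\binom{q-1}{k-1}\right]=\pm\binom{q}{k}$ — but one must be careful, since with $\beta=0$ the prefactors $(1-p^{\alpha-\beta})$ and $p^{\alpha-\beta}$ are $1-q$ and $q$, not $1$. So the actual leading behavior is a combination $\bigl(q - (q-1)\bigr)$-type telescoping that I expect collapses the $\binom{q}{k}$-sized main terms against each other, leaving a genuinely smaller but still positive residual; tracking this cancellation precisely is where I'd spend the most care. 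The natural strategy is to group terms by the value of $v=p^{\beta'}$ (here $\beta'=lo_{\Pi P}(p)$ or $l$) and by $\Pi P$, write the coefficient of each fixed $s_{q,k}(u,v)$ as a sum over the relevant $P$'s and $l$'s, and recognize those coefficient-sums as evaluations of $\prod_{e\in\mathcal P(q-1)}(1-\text{something})$ or of $q$-binomial Möbius identities (as in \eqref{4.15}--\eqref{4.16}) that simplify to $0$ or $\pm1$.

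Third, once the sum is reduced to a manageable closed form, I would prove positivity by a direct size comparison: the positive terms (which should be $s_{q,k}$-values with small $u,v$, hence large binomial coefficients of order $\binom{q/u}{k/u}$) dominate the alternating tail of negative terms, which involve binomial coefficients at much smaller top arguments. A convenient tool is the crude bound $\binom{N}{m}\ge 2\binom{N/p}{m/p}$ or similar, valid in the parameter range $k\le q/2$, to show each negative term is outweighed by a fixed earlier positive term. The hypothesis $k\le q/2$ enters exactly here, ensuring the binomial coefficients are on the increasing side and the comparison goes the right way; the hypotheses $3\le k$ and the congruence condition on $k$ are used to guarantee there is at least one nonzero positive term to begin with and to prune the sum to a finite, $\alpha$-independent shape.

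\textbf{Anticipated main obstacle.} The delicate point is the interaction between the two prefactors $(1-p^{\alpha-\beta})$ and $p^{\alpha-\beta}$ at $\beta=0$: naively the outer sum looks like it could be $(1-q)$ times something of size $\binom{q}{k}$, which is negative and large, so the positivity must come from a nontrivial cancellation with the $P=\emptyset$ part of the inner sum (where $\Pi P=1$, $o_1(p)=1$, giving identical $s_{q,k}(1,p^l)$ terms with the opposite sign and a prefactor of $+q$). Verifying that these two families of terms combine to leave precisely a $+1\cdot\binom{q}{k}$-type leading contribution (rather than $\pm(q-1)$ or $\pm q$ times it) — i.e.\ that the ``$\emptyset$-stratum'' of the inclusion–exclusion behaves as expected — is the step I'd check most carefully, likely by reorganizing \eqref{4.11} before the Möbius simplification and isolating the $C=\emptyset$, $X$-arbitrary part explicitly. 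If that cancellation fails to be as clean as hoped, the fallback is to invoke \cite[Theorem~4.4]{Sun-TJM-2010} directly: since the theorem statement already attributes the result to Sun, the ``proof'' here may legitimately consist of verifying that Sun's construction produces a $B\in\binom{\f_q}{k}$ with $\text{AGL}(1,\f_q)_B=S(1,0,\{0\})$ under exactly these congruence conditions, i.e.\ exhibiting an explicit $k$-set invariant under no nontrivial affine map, which is a finite and elementary check.
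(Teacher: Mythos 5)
Your primary route — specializing \eqref{4.10.0} to $\beta=0$, $H=\{0\}$ and proving the resulting alternating sum is positive — is not what the paper does, and as written it has a genuine gap. The paper offers no proof of Theorem~\ref{T6.3} at all: the statement is a translation of Sun's Theorem~4.4 \cite{Sun-TJM-2010} into the language of the function $\mathcal N$, and the justification is the citation (Sun exhibits an explicit $k$-set with trivial stabilizer under these congruence conditions), together with the remark that the numerical results are consistent. Your plan to instead extract positivity from Theorem~\ref{T4.2} is attempting something the author explicitly leaves open: Question~\ref{Q6.1} asks to determine exactly when $\mathcal N=0$, and Question~\ref{Q6.4} asks for the $p$-adic valuation of $\mathcal N$ precisely because no direct nonvanishing argument from \eqref{4.10.0} is known. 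Concretely, the steps you flag as "where I'd spend the most care" are exactly where the argument is missing. Your observation that the $(1-q)$-weighted outer sum and the $q$-weighted $P=\emptyset$ stratum combine with net coefficient $1$ is correct and gives a leading term $+\binom{q}{k}$, but the subsequent "direct size comparison" is not routine: for the smallest admissible $k$ (e.g.\ $k=p$ with $p$ odd), the competing terms such as $(q-1)\binom{\alpha}{1}_p\,s_{q,k}(1,p)$ and the $q$-weighted terms from $\emptyset\ne P\subset\mathcal P(q-1)$ are of order $q^2$ or larger and only become negligible relative to $\binom{q}{k}$ after further cancellation; a crude bound of the form $\binom{N}{m}\ge 2\binom{N/p}{m/p}$ does not by itself control the full alternating double sum over all $P$ and $l$. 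Until that analysis is actually carried out, this branch of the proposal is a program, not a proof.

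Your fallback, however, is exactly the paper's proof: invoke \cite[Theorem~4.4]{Sun-TJM-2010}, which constructs $B\in\binom{\f_q}{k}$ with $\text{AGL}(1,\f_q)_B=S(1,0,\{0\})$ (the trivial subgroup) under the stated hypotheses, whence $\mathcal N(S(1,0,\{0\}),k)>0$ by the definition \eqref{4.1}. If you take that route, nothing further is needed beyond checking that Sun's hypotheses match the congruence conditions here. If you insist on the first route, you would in effect be answering (a case of) Question~\ref{Q6.1}, which would be new content rather than a reproof.
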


\begin{thm}[{\cite[Theorem~4.17]{Sun-TJM-2010}}]\label{T6.4}
Let $\beta, d, m$ be positive integers such that $\beta\mid\alpha$, $\beta<\alpha$, $p^\beta>2$ and $d+m\le \alpha/\beta$. Let $c_i$, $k_i$ ($1\le i\le m$) be positive integers such that $c_i\mid p^\beta-1$, $\text{\rm gcd}(c_1,\dots,c_m)=c>1$, and $p\nmid k_i$. Further assume that for each $1\le i\le m$, $c_i\mid k_i$ and $2\le k_i\le p^\beta-3$, or $c_i\mid k_i-1$ and $3\le k_i\le p^\beta-2$. Then
\[
\mathcal N(S(\gamma^{(q-1)/c}, 0,H),\,k_1\cdots k_mp^{\beta d})>0
\]
for some $H<\f_q$ with $|H|=p^{\beta d}$ and $|H'|=p^\beta$.
\end{thm}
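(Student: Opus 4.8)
The plan is to realize the desired stabilizer by an explicit product construction that combines (a) a $\mathbf{k_i}$-element subset $B_i$ of a fixed copy of $\f_{p^\beta}$ whose stabilizer in $\text{AGL}(1,\f_{p^\beta})$ is exactly the cyclic group generated by the scalar of order $c_i$ (together with no translations), and (b) a decomposition of $\f_q$ as an $\f_{p^\beta}$-vector space into $m+d$ suitable one-dimensional pieces, so that the resulting subset $B\subset\f_q$ of size $k_1\cdots k_m p^{\beta d}$ has stabilizer $S(\gamma^{(q-1)/c},0,H)$ with $|H|=p^{\beta d}$ and $|H'|=p^\beta$. First I would use Theorem~\ref{T6.2} applied inside the field $\f_{p^\beta}$ (which is legitimate because $p^\beta>2$, $c_i\mid p^\beta-1$, $c_i\mid k_i$ or $c_i\mid k_i-1$, and the excluded case $(7,3,1)$ is avoided since $c_i>1$ would be needed eventually but each individual $k_i$ only needs $c_i\mid k_i$ or $c_i\mid k_i-1$ — so actually I must be careful and instead invoke the more basic single-orbit-structure construction). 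The cleaner route is: by the hypothesis $2\le k_i\le p^\beta-3$ with $c_i\mid k_i$, or $3\le k_i\le p^\beta-2$ with $c_i\mid k_i-1$, one can take $B_i$ to be a union of $\langle$scalar of order $c_i\rangle$-orbits on $\f_{p^\beta}$ (the nonzero orbits have size $c_i$, and there is the fixed point $0$), chosen generically enough that no larger element of $\text{AGL}(1,\f_{p^\beta})$ preserves it; the count of available orbits and the range restrictions on $k_i$ guarantee such a generic choice exists, and the argument that the full stabilizer collapses to $\langle\gamma^{(q-1)/c_i}\rangle$ is the orbit-counting/inclusion–exclusion bound already packaged in Theorem~\ref{T4.3} for $\f_{p^\beta}$.

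\medskip
Next I would assemble the global subset. Fix an $\f_{p^\beta}$-basis $\{w_1,\dots,w_{m+d},\dots,w_{\alpha/\beta}\}$ of $\f_q$; using $d+m\le\alpha/\beta$ this is possible. Put $H=\bigoplus_{j=1}^{d}\f_{p^\beta}w_{m+j}$, so $|H|=p^{\beta d}$; choosing the $w_j$ generically (i.e.\ not contained in any proper subfield-submodule) forces $H'=\f_{p^\beta}$ exactly, which is where Lemma~\ref{L5.2} is used to know such $H$ exists. Then set
\[
B=\Bigl\{x_1 w_1+\cdots+x_m w_m+h:\ (x_1,\dots,x_m)\in B_1\times\cdots\times B_m,\ h\in H\Bigr\},
\]
which has size $k_1\cdots k_m\cdot p^{\beta d}$ as required. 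The map $x\mapsto\gamma^{(q-1)/c}x$ (where $\gamma^{(q-1)/c}$ generates the order-$c$ subgroup of $\f_q^*$, and $c\mid p^\beta-1$ so this scalar lies in $\f_{p^\beta}^*$) acts diagonally on the coordinates $x_i$; since $c\mid c_i$ for each $i$, multiplication by $\gamma^{(q-1)/c}$ preserves each orbit-union $B_i$, hence preserves $B$, and it clearly preserves $H$. Together with all translations by $H$, this shows $S(\gamma^{(q-1)/c},0,H)\subseteq\text{AGL}(1,\f_q)_B$.

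\medskip
The reverse inclusion $\text{AGL}(1,\f_q)_B\subseteq S(\gamma^{(q-1)/c},0,H)$ is the main obstacle, and I would handle it in two stages. Write an arbitrary $\sigma=\left[\begin{smallmatrix}a&b\\0&1\end{smallmatrix}\right]$ stabilizing $B$. Stage one: project modulo $H$. Because $B$ is a union of cosets of $H$ and the image $\bar B\subset\f_q/H$ is (identifying $\f_q/H\cong\bigoplus_{j\ne m+1,\dots,m+d}\f_{p^\beta}w_j$) essentially the product set $B_1\times\cdots\times B_m\times\{0\}^{\cdots}$, the induced action of $\sigma$ on $\f_q/H$ must stabilize this product, and by choosing the $w_j$ generically (so that no affine map mixes the $w_i$-coordinates with the complementary coordinates while fixing the product) one concludes $a\in\f_{p^\beta}^*$, $b\in H$, and that $a$ induces on each factor $\f_{p^\beta}w_i$ an element of $\text{AGL}(1,\f_{p^\beta})_{B_i}$ composed with coordinate permutations that are ruled out by genericity. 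Stage two: apply the single-field stabilizer computation, i.e.\ the realization $\text{AGL}(1,\f_{p^\beta})_{B_i}=\langle\gamma^{(q-1)/c_i}\rangle$ (no translations), which forces $a\in\bigcap_i\langle\gamma^{(q-1)/c_i}\rangle=\langle\gamma^{(q-1)/c}\rangle$ since $\text{gcd}(c_1,\dots,c_m)=c$; combined with $b\in H$ this gives $\sigma\in S(\gamma^{(q-1)/c},0,H)$. The delicate point throughout is the genericity of the basis $\{w_j\}$: one needs it simultaneously to make $H'=\f_{p^\beta}$, to prevent any affine self-map of $\f_q$ from permuting or mixing the designated coordinate lines, and to ensure each $B_i$ can be chosen with trivial "extra" symmetry; a dimension/counting argument (the set of "bad" bases is a proper subvariety, and $\alpha/\beta$ is large enough relative to $d+m$) shows such a basis exists, and this existence argument — rather than any single computation — is where the real work lies. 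Finally, nonvanishing of $\mathcal N$ follows: $B$ witnesses $\mathcal N(S(\gamma^{(q-1)/c},0,H),k_1\cdots k_m p^{\beta d})>0$.
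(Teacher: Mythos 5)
First, a point of reference: the paper does not prove this statement at all --- Theorem~\ref{T6.4} is imported verbatim from \cite[Theorem~4.17]{Sun-TJM-2010} and merely restated in the language of the function $\mathcal N$, so there is no in-paper proof to compare against. Your proposal must therefore stand on its own, and as written it has a genuine gap at exactly the point you yourself flag as ``where the real work lies.'' The easy half is fine: $B$ is a union of $H$-cosets and the scalar $\gamma^{(q-1)/c}$ lies in $\f_{p^\beta}^*$ and stabilizes each $B_i$ (since $c\mid c_i$), so $S(\gamma^{(q-1)/c},0,H)\subseteq\text{AGL}(1,\f_q)_B$. But the reverse inclusion is entirely deferred to an unproven ``genericity'' assertion. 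Concretely: an element $\sigma=\left[\begin{smallmatrix}a&b\\0&1\end{smallmatrix}\right]$ with $a\in\f_q^*\setminus\f_{p^\beta}^*$ is still $\f_{p^\beta}$-linear on $\f_q$, does not respect your coordinate decomposition, and satisfies $aH\ne H$; yet $aB+b$ could in principle equal $B$ by matching up the $|B_1|\cdots|B_m|$ cosets of $H$ with cosets of $aH$ in a different pattern. Ruling this out is the entire content of the theorem, and ``the set of bad bases is a proper subvariety'' is not an argument --- it is not even clear what variety is meant, the genericity must hold simultaneously with $H'=\f_{p^\beta}$ (Lemma~\ref{L5.2} gives existence of such an $H$ but not of a compatible generic basis), and in the extremal case $d+m=\alpha/\beta$ there are no spare coordinates to absorb bad behavior.

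There is a second gap in the building blocks. You need, inside $\f_{p^\beta}$, a $k_i$-element union of orbits of the order-$c_i$ scalar group whose full stabilizer in $\text{AGL}(1,\f_{p^\beta})$ is \emph{exactly} that scalar group. You attribute this alternately to Theorem~\ref{T6.2} and to ``the orbit-counting/inclusion--exclusion bound already packaged in Theorem~\ref{T4.3}.'' Neither works as stated: Theorem~\ref{T6.2} requires $3\le k\le q/2$ and excludes $(7,3,1)$, whereas your ranges are $2\le k_i\le p^\beta-3$ (resp.\ $3\le k_i\le p^\beta-2$), and it produces a set whose stabilizer is $S(\gamma^{(q-1)/c_i},0,\{0\})$ for \emph{the} field in question, not a statement you can invoke for the subfield $\f_{p^\beta}$ without redoing the complementation and edge-case analysis; Theorem~\ref{T4.3} gives only a formula for $\mathcal N$, and positivity of that formula for your parameters is precisely what would need to be proved, not cited. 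So the proposal is a reasonable sketch of a product construction in the spirit of Sun's planar-nearring blocks, but both the existence of the local blocks and the collapse of the global stabilizer are asserted rather than established.
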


Our numerical results are consistent with the above theorems. Moreover,
the numerical results in Appendix~A2 provide plenty material for conjectures and open questions about the function $\mathcal N=\mathcal N(S(\gamma^{(q-1)/d},0,H),k)$. We include a few open questions to invite further work. We always assume that $k\equiv 0$ or $|H|\pmod{d|H|}$, which is necessary for $\mathcal N\ne 0$ (Remark~\ref{R4.4} (i)).

\begin{ques}\label{Q6.1}
For $3\le k\le q/2$, $\mathcal N=0$ only if $p\mid k$ (Theorem~\ref{T6.2}). Determine exactly when $\mathcal N=0$.
\end{ques}

\begin{ques}[A subquestion of Question~\ref{Q6.1}]\label{Q6.2}
For $p=2$ and $3\le k\le q/2$, it appears that $\mathcal N=0$ only if $k$ is a power of $2$. Is this true?
\end{ques}

\begin{ques}\label{Q6.3}
In certain cases where $\mathcal N(S,k)$ is positive but small, determine those $B\in\binom{\f_q}k$ such that $\text{\rm AGL}(1,\f_q)_B=S$.
\end{ques}

\begin{ques}\label{Q6.4} 
Determine the $p$-adic valuation of $\mathcal N(S,k)$ in \eqref{4.10.0} and \eqref{4.21.0}. This could give $\mathcal N(S,k)\ne 0$ without computations involving large integers.
\end{ques}

The work of the present paper relies on a fairly detailed description of the lattice of subgroups of $\text{AGL}(1,\f_q)$. In fact, based on the approach of the present paper, we are able to determine the M\"obius function of the lattice of subgroups of $\text{\rm AGL}(1,$ $\f_q)$ \cite{Hou-ppt}.


\section*{Appendix}

\subsection*{A1. A Mathematica code for computing $\mathcal N(S(\gamma^{(q-1)/d},0,H),k)$}\

In the following Mathematica program, the input is $q=p^\alpha$; the output is the array $(k,d,o_d(p),i,j,\beta,\mathcal N)$, where
\begin{align}\label{para-cond}\tag{A1.1}
&0\le k\le p^\alpha/2,\\
&d\mid p^\alpha-1,\cr
&i\mid(\alpha/o_d(p)),\cr
&\begin{cases}
j=0,1&\text{if}\ i=\alpha/o_d(p),\cr
0<j<\alpha/o_d(p)i&\text{if}\ i<\alpha/o_d(p),
\end{cases}\cr
&\beta=o_d(\alpha)ij,\cr
&k\equiv 0\ \text{or}\ p^\beta\pmod{dp^\beta},\cr
&\mathcal N=\mathcal N(S(\gamma^{(q-1)/d},0,H),k),\ \text{where}\ |H|=p^\beta,\; |H'|=p^{o_d(p)i}. \nonumber
\end{align}

\medskip

\begin{mmaCell}{Code}

MyBinom[x_, y_] := If[IntegerQ[y], Binomial[x, y], 0];
s[q_, k_, x_, y_] := 
  MyBinom[(q - y)/(x*y), k/(x*y)] + 
   MyBinom[(q - y)/(x*y), (k - y)/(x*y)];

p = 2;
alpha = 6;
q = p^alpha;
Print["q= ", p, "^", alpha];
divd = Sort[Divisors[q - 1]];
For[k = 0, k <= q/2, k++,
  For[u = 1, u <= Length[divd], u++,
   d = Extract[divd, {u}];
   
   odp = MultiplicativeOrder[p, d];
   divi = Sort[Divisors[alpha/odp]];
   For[v = 1, v <= Length[divi], v++,
    i = Extract[divi, {v}];
    If[i == alpha/odp, j0 = 0; j1 = 1, j0 = 1; j1 = alpha/(odp*i) - 1];
    
    For [j = j0, j <= j1, j++,
     beta = odp*i*j;
     
     condition = Mod[k, d*p^beta];
     If[condition != 0 && condition != p^beta, Continue[]];
     
     x = (p^(odp*i) - 1)/d;
     If[x == 1, PSet = {},
      PSet = FactorInteger[x][[All, 1]]
      ];
     PP = Subsets[PSet];
     
     If[d == 1, 
      Num = (1 - p^(alpha - beta))*
        Sum[(-1)^l*p^Binomial[l, 2]*QBinomial[alpha - beta, l, p]*
          s[q, k, 1, p^(beta + l)], {l, 0, alpha - beta}],
      Num = 0];
     
     N1 = 0;
     For[t = 1, t <= Length[PP], t++,
      P = Extract[PP, {t}];
      piP = Times @@ P;
      odpip = MultiplicativeOrder[p, d*piP];
      N1 = 
       N1 + Sum[(-1)^(Length[P] + l)*p^(odpip*Binomial[l, 2])*
          QBinomial[(alpha - beta)/odpip, l, p^odpip]*
          s[q, k, d*piP, p^(beta + l*odpip)], {l, 
          0, (alpha - beta)/odpip}];
      ];
     If[d == 1,
      Num = Num + p^(alpha - beta)*N1,
      Num = Num + N1];
     
     Print["k= ", k, ", d= ", d, ", o_d(p)= ", odp, ", i= ", i, 
      ", j= ", j, ", beta= " , beta, ",  N= ", Num];
     ];
    ];
   ];
  ];

\end{mmaCell}


\subsection*{A2. Numerical results}\

Table~\ref{Tb1} gives the values of $\mathcal N(S(\gamma^{(q-1)/d},0,H),k)$ for $q\le 101$. To recall the meanings of and the conditions on the parameters, refer to \eqref{para-cond} in Appendix~A1.



\newpage


\]
\end{table}


\end{document}